\title{A bijective proof and generalization of Siladi\'c's Theorem}
\author{Isaac KONAN \\ IRIF, Universit\'e Paris Diderot, Paris,  75013, France \\ konan@irif.fr} 
\date{}
\definecolor{foge}{rgb}{0.1, 0.6, 0.1}
\newcommand{\So}{\textbf{Step 1 }}
\newcommand{\Soo}{\textbf{Step 1}}
\newcommand{\St}{\textbf{Step 2 }}
\newcommand{\Stt}{\textbf{Step 2}}
\newcommand{\Thm}[1]{\textbf{Theorem \ref{#1}}}
\newcommand{\Lem}[1]{\textbf{Lemma \ref{#1}}}
\newcommand{\Prp}[1]{\textbf{Proposition \ref{#1}}}
\newcommand{\Sct}[1]{Section \ref{#1}}
\newcommand{\Od}{\mathcal{O}}
\newcommand{\C}{\mathcal{C}}
\newcommand{\D}{\mathcal{D}}
\newcommand{\Cr}{\textit{Chasles' relation }}
\newcommand{\Crr}{\textit{Chasles' relation}}
\newcommand{\E}{\mathcal{E}}
\newcommand{\ab}{\textcolor{blue}{a}}
\newcommand{\ba}{\textcolor{foge}{b}}
\newcommand{\aba}{\textcolor{blue}{a^2}}
\newcommand{\bab}{\textcolor{foge}{b^2}}
\newcommand{\abab}{\ab\ba}
\newcommand{\baba}{\ba\ab}
\newcommand{\la}{\lambda}
\newcommand{\sss}{\{1,\ldots,s\}}
\newcommand{\ssss}{\{1,\ldots,s-1\}}
\numberwithin{equation}{section}
\newtheorem{theo}{Theorem}[section]
\newtheorem{prop}[theo]{Proposition}
\newtheorem{lem}[theo]{Lemma}
\newtheorem{cor}[theo]{Corollary}
\newtheorem{rem}[theo]{Remark}
\begin{document}
\maketitle
\ytableausetup{centertableaux,boxsize=0.31cm}
\begin{abstract}
 In a recent paper, Dousse introduced a refinement of Siladi\'c's theorem on partitions, where parts occur in two primary and three secondary colors. Her proof used the method of weighted words and $q$-difference equations. 
The purpose of this paper is to give a bijective proof of a generalization of Dousse's theorem  from two primary colors to an arbitrary number of primary colors.
\end{abstract}
\tableofcontents
\newpage
\section{Introduction}\label{intro}
In this paper, we denote by
$\lambda_1+\cdots+\lambda_s$ a partition of a non-negative integer  $n$. For any complex numbers $x,q\text{ with }\,\vert q\vert<1$, and any non-negative integer $n$, we define
\[(x;q)_n = \prod_{k=0}^{n-1} (1-xq^k)\,\,,\]
with the convention $(x;q)_0=1$, and 
\[(x;q)_\infty = \prod_{k=0}^\infty (1-xq^k)\,\cdot\]
Recall the Rogers-Ramanujan identities \cite{RR19}, which state that for $a\in \lbrace 0,1\rbrace$,
\begin{equation}
\sum_{n\geq 0} \frac{q^{n(n+a)}}{(q;q)_n} = \frac{1}{(q^{1+a};q^5)_\infty(q^{4-a};q^5)_\infty}\,\cdot
\end{equation}
These identities give an equality between the cardinalities of two sets of partitions : the set of partitions of $n$ with parts differing by at least two and greater than $a$, and  the set of partitions of $n$ with parts congruent to $1+a,4-a \mod 5$.
In the spirit of these identities, a $q$-series or combinatorial identity is said to be of \textit{Rogers-Ramanujan type} if it links some sets of partitions with certain difference conditions to others with certain congruence conditions. 
Another well-known example is Schur's partition theorem \cite{Sc26}, which states that the number of partitions of $n$ into parts congruent to $\pm 1 \mod 6$ is equal to the number of partitions of $n$ where parts differ by at least three and multiples of three differ by at least six.\\\\
A rich source of such identities is the representation theory of Lie algebras. This has its origins in work of Lepowsky and Wilson \cite{LW84}, who proved the Rogers-Ramanujan identities
by using representations of the affine Lie algebra $\mathfrak{sl}(2,\mathbb{C})^\sim$. 
Subsequently, Capparelli \cite{C93}, Meurman-Primc \cite{MP87,MP99} and others examined
related standard modules and affine Lie algebras and found many new
Rogers-Ramanujan type identities.
\\Our motivation in this paper is one such identity given by Siladi\'c \cite{Si02} in his study of representations of the twisted affine Lie algebra $A_2^{(2)}$.
\begin{theo}[Siladi\'c, rephrased by Dousse]\label{th1}
The number of partitions $\lambda_1+\cdots+\lambda_s$ of an integer $n$ into distinct odd parts 
is equal to the number of partitions of $n$ into parts different from $2$ such that 
$\lambda_i -\lambda_{i+1}\geq 5$ and 
\[\lambda_i -\lambda_{i+1}  = 5 \Rightarrow \, \lambda_i  \equiv 1,4 \mod 8\,,\]
\[\lambda_i -\lambda_{i+1}  = 6 \Rightarrow \, \lambda_i \equiv 1,3,5,7\mod 8\,,\]
\[\lambda_i -\lambda_{i+1}  = 7 \Rightarrow \, \lambda_i \equiv 0,1,3,4,6,7 \mod 8\,,\]
\[\lambda_i -\lambda_{i+1}  = 8 \Rightarrow \, \lambda_i \equiv 0,1,3,4,5,7\mod 8\,\,\cdot\]
\end{theo}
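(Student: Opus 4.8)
The plan is to deduce Theorem~\ref{th1} from a single identity about \emph{colored partitions}, proved bijectively, in the spirit of the method of weighted words. First I would attach to each integer part a color from an alphabet consisting of $n$ \emph{primary} colors $a_1,\dots,a_n$ and the associated \emph{secondary} colors $a_ia_j$ with $1\le i\le j\le n$, recording a colored part as a monomial $c\,q^k$ whose \emph{size} is $k$ and whose \emph{color} is $c$. Taking $n=2$ recovers Dousse's refinement of Theorem~\ref{th1}, and Theorem~\ref{th1} itself is then obtained from the colored identity by the dilation $q\mapsto q^2$ followed by the specialization sending the primary colors to odd residues and collapsing the secondary colors: the thresholds $5,6,7,8$ and the modulus $8$ in Theorem~\ref{th1} are precisely the images under $q\mapsto q^2$ of the colored gap data attached to the various ordered pairs of colors, and the excluded part $2$ is the image of the forbidden primary‑colored part of size $1$.

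On the ``product'' side I put the set $\mathcal{D}$ of colored partitions all of whose parts are primary‑colored and which satisfy the \emph{minimal difference conditions}: consecutive parts $\lambda_i,\lambda_{i+1}$ of colors $c,c'$ must satisfy $\lambda_i-\lambda_{i+1}\ge\delta(c,c')$ for a fixed function $\delta$ of the ordered color pair. On the ``difference'' side I put the set $\mathcal{S}$ of colored partitions, possibly using secondary colors, subject to the colored analogue of the Siladi\'c conditions. I would then construct a size‑preserving and color‑content‑preserving bijection $\Phi\colon\mathcal{S}\to\mathcal{D}$ by a ``machine'': starting from a partition in $\mathcal{S}$, repeatedly locate the smallest index at which an adjacent pair violates the minimal difference condition and apply an elementary local move — either reorder the two parts while exchanging a factor between their colors, or split a part carrying a secondary color $a_ia_j$ into two primary‑colored parts (with the reverse merging as the inverse move) — each move leaving the multiset of $a_\ell$‑exponents and the total size unchanged. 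Here \Cr (additivity of the gaps $\lambda_i-\lambda_j$ along a run) is the bookkeeping tool showing that the moves interact consistently and that a suitable nonnegative statistic strictly decreases, so that the process terminates inside $\mathcal{D}$; running the reverse moves from $\mathcal{D}$ yields $\Phi^{-1}$.

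The main obstacle is the design and verification of this machine. One must choose $\delta$ and the elementary moves so that (i) a partition with no violation is automatically primary‑colored and lies in $\mathcal{D}$, (ii) every move is reversible and the forward and backward moves are mutually inverse \emph{locally}, and (iii) a weighted count of violations, controlled via \Cr, strictly decreases at each step, which forces termination. With $n$ primary colors the number of color pairs — and hence of move types — grows, and the secondary colors force the split/merge moves to be treated with care near the boundary (parts of small size, the excluded size‑$1$ primary part corresponding to the forbidden $2$); ruling out the possibility that two move types ``fight'' indefinitely is where the real work lies. Once the bijection $\Phi$ is established, deriving Theorem~\ref{th1} is a routine generating‑function computation under the dilation $q\mapsto q^2$ and the color specialization.
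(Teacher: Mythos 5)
Your plan follows the same route as the paper: weighted words with $m$ primary colors and secondary colors $a_ia_j$, a size- and color-content-preserving bijection between the difference-condition side and the distinct-primary-parts side built from local split/merge and crossing moves, and a final dilation to recover Siladi\'c's theorem. However, what you present is an architecture rather than a proof, and the pieces you defer are precisely where the content lies. First, you never specify the minimal-difference function $\delta$ nor the elementary ``reorder while exchanging a color factor'' move; in the paper this is the operation $\Lambda$ of \Sct{kop}, and the whole construction rests on \Prp{pr1} and \Prp{pr2}, which say that $\Lambda$ turns a pair violating $\gg_c$ into one satisfying it (and dually for $\Lambda^{-1}$ and $>_c$). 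Second, and more seriously, your termination/confluence argument is only asserted: ``a suitable nonnegative statistic strictly decreases'' handles termination (the paper uses the sum of the positions of the secondary parts), but it does not show that the result is independent of the order in which violations are treated, that parts remain positive, or that the final configuration is globally well-ordered rather than merely free of adjacent violations --- $\gg_c$ is only a partial order, and local repairs can create new violations elsewhere. The paper resolves this not by an abstract confluence argument but by explicitly predicting the final position of every part: the functions $\Delta,\alpha,\beta$ satisfying \Crr and the gauge functions $\phi,\psi$ of \Prp{pr11} and \Prp{pr21} determine $P(i)$ and $Q(i)$ in closed form, from which uniqueness, well-ordering (\Prp{pr12}, \Prp{pr22}) and the inverse relation $\Psi=\Phi^{-1}$ all follow. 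Without an argument of comparable strength your machine could, as you yourself worry, have move types that ``fight,'' and nothing in the proposal rules this out.

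Two smaller but genuine errors occur in your reduction to \Thm{th1}. The dilation for two primary colors is not $q\mapsto q^2$ but $q\mapsto q^4$, $a\mapsto aq^{-3}$, $b\mapsto bq^{-1}$ (see \eqref{dila}): this is what sends the colored parts bijectively onto the odd integers and turns the undilated gaps $1,2,3,4$ of table \eqref{Aa} into the thresholds $5,\dots,8$ modulo $8=2\cdot 4$; with $q\mapsto q^2$ the modulus would be $4$ and the primary parts would not exhaust the odd numbers. And the excluded part $2$ is the image of the forbidden \emph{secondary} part $1_{b^2}$, not of a primary part of size $1$: the primary parts of size $1$ are $1_a\mapsto 1$ and $1_b\mapsto 3$, both of which are allowed.
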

For example, for $n=16$, the partitions into distinct odd parts are
\[15+1, 13+3, 11+5, 9+7\text{ and }7+5+3+1\,,\]
while the partitions of the second kind are
\[15+1, 13+3, 11+5, 16\text{ and } 12+4\,\cdot \]
Siladi\'c's theorem has recently been refined by Dousse \cite{Dousse16}.
She was inspired by the original method of weighted words, first introduced by Alladi and Gordon \cite{AG93} to generalize Schur's partition theorem.
Her framework is as follows: we consider parts colored by two primary colors $a,b$ and three secondary colors $a^2,b^2,ab$, with the colored parts ordered by 
\begin{equation}\label{ord}
1_{ab}<_c1_a<_c1_{b^2}<_c1_b<_c2_{ab}<_c2_a<_c3_{a^2}<_c2_b<_c3_{ab}<_c3_a<_c3_{b^2}<_c3_b<_c\cdots\,\cdot
\end{equation}
Note that only odd parts can be colored by $a^2,b^2$. The dilation
\begin{equation}\label{dila}
q\rightarrow q^4\,,
a\rightarrow aq^{-3}\,,
b\rightarrow bq^{-1}\,,
\end{equation}
leads to the natural order 
\begin{equation}
0_{ab}<1_a<2_{b^2}<3_b<4_{ab}<5_a<6_{a^2}<7_b<8_{ab}<9_a<10_{b^2}<11_b<\cdots\,\cdot
\end{equation}
We then impose the minimal differences according the following table
\begin{equation}\label{A}
\begin{array}{|c|cccccccc|}
\hline
_{\lambda_i}\setminus^{\lambda_{i+1}}&a^2_{odd}&a_{odd}&a_{even}&b^2_{odd}&b_{odd}&b_{even}&ab_{odd}&ab_{even}\\
\hline
a^2_{odd}&4&4&3&4&4&3&4&3\\
a_{odd}&2&2&3&2&2&3&2&1\\
a_{even}&3&3&2&3&3&2&3&2\\
b^2_{odd}&2&2&3&4&4&3&2&3\\
b_{odd}&2&2&1&2&2&3&2&1\\
b_{even}&1&1&2&3&3&2&1&2\\
ab_{odd}&2&2&3&4&4&3&2&3\\
ab_{even}&3&3&2&3&3&2&3&2\\
\hline
\end{array}\,,
\end{equation}
which can be reduced to the table :  
\begin{equation}\label{Aa}
\begin{array}{|c|cccccc|}
\hline
_{\lambda_i}\setminus^{\lambda_{i+1}}&a^2_{odd}&a&b^2_{odd}&b&ab_{odd}&ab_{even}\\
\hline
a^2_{odd}&4&3&4&3&4&3\\
a&2&2&2&2&2&1\\
b^2_{odd}&2&2&4&3&2&3\\
b&1&1&2&2&1&1\\
ab_{odd}&2&2&4&3&2&3\\
ab_{even}&3&2&3&2&3&2\\
\hline
\end{array}\,\cdot
\end{equation}
One can check that these minimal differences define a partial strict order on the set of parts colored by primary and secondary colors. We denote it by $\gg_c$, so that stating that $\lambda_i - \lambda_{i+1}$ respects the minimal difference condition is equivalent to saying that $\lambda_i\gg_c\lambda_{i+1}$.
With this coloring, Dousse refined the Siladi\'c theorem  as follows:
\begin{theo}[Dousse]\label{th2}Let $(u,v,n) \in \mathbb{N}^3$.
Denote by  $\mathcal{D}(u,v,n)$ the set of all the partitions of $n$, such that no part is equal to $1_{ab},1_{a^2}$ or $1_{b^2}$, with the difference between two consecutive parts following the minimal conditions in \eqref{A}, and with $u$ equal to the number of parts with color $a$ or $ab$ plus twice the number of parts colored by $a^2$, and $v$ equal to the number of parts with color $b$ or $ab$ plus twice the number of parts colored by $b^2$.
Denote by $\mathcal{C}(u,v,n)$ the set of all the partitions of $n$ with $u$ distinct parts colored by $a$ and $v$ distinct parts colored by $b$. We then have $\sharp \mathcal{D}(u,v,n) = \sharp \mathcal{C}(u,v,n)$.
\end{theo}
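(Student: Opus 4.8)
The plan is to construct an explicit weight-preserving bijection $\Phi\colon\C(u,v,n)\to\D(u,v,n)$ by iterating a single insertion operation together with its inverse. Via the dilation \eqref{dila}, an element of $\C(u,v,n)$ is merely a pair $(\alpha,\beta)$ of partitions into distinct parts --- $\alpha$ with $u$ parts, regarded as coloured $a$, and $\beta$ with $v$ parts, coloured $b$, with no condition linking the two --- so $\C(u,v,n)$ is ``free'' and all the content lies in assembling such data into a single partition obeying the difference conditions of \eqref{A}. Under the dilation the $u+v$ parts acquire pairwise distinct values, the $a$-parts lying in the residue class $1$ and the $b$-parts in the residue class $3$ modulo $4$, and the idea is to insert them one at a time, from largest to smallest, into a partition kept in $\D$ for the current sub-parameters; the empty partition is the common base case, being the unique element of each of $\C(0,0,0)$ and $\D(0,0,0)$.

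The first step is to define the insertion map $\Ga$: given $\Lambda\in\D(u',v',n')$ and a new part of primary colour $c\in\{a,b\}$ whose value lies below every part of $\Lambda$, one places it at the bottom and lets it rise, one neighbour at a time, applying a local correction whenever $\gg_c$ fails --- either sliding the part up past an admissible gap, or \emph{merging} it with the part above (two $a$'s giving an $a^2$-part, an $a$ and a $b$ giving an $ab$-part, two $b$'s giving a $b^2$-part), the dilation being calibrated so that a merge conserves the total weight and so that $a^2$- and $b^2$-parts can arise only at odd values, as \eqref{ord} demands. After a merge the new, larger secondary-coloured part keeps rising until it comes to rest below some part of $\Lambda$ in the sense of $\gg_c$. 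The function of the \Cr is to break this rise into finitely many local, weight-neutral \emph{elementary moves}, so that well-definedness of $\Ga$ reduces to checking against the tables \eqref{A}--\eqref{Aa} that each elementary move sends $\D$ to $\D$; termination is clear, since the travelling part only ever moves upward through finitely many sites. One verifies at the same time that $\Ga$ raises $u'$ by one if $c=a$, and $v'$ by one if $c=b$: the weighting built into the definition of $\D$ --- an $a^2$-part counting twice towards $u$, an $ab$-part once towards each of $u$ and $v$ --- is exactly what makes merges neutral for the $(u,v)$-statistics.

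The second step is to invert $\Ga$, by exhibiting the peeling map $\Gi$ that reads off the bottom of a nonempty partition of $\D$ the last insertion that produced it --- detaching a bottom primary part, or splitting a bottom secondary part into the two primary parts whose merge created it --- and then undoes the chain of elementary moves. The forbidden parts $1_{ab},1_{a^2},1_{b^2}$ of Theorem~\ref{th2} intervene here as the boundary condition guaranteeing that $\Gi$ never produces a nonpositive part and, dually, as precisely the configurations $\Ga$ cannot create from legal input; this is checked against the first row and column of \eqref{A}. Composing $u+v$ applications of $\Ga$ in the prescribed order gives $\Phi$, and the matching composition of $\Gi$'s gives $\Phi^{-1}$, since $\Gi$ recovers the insertion order canonically; preservation of $n$ and correct transport of $(u,v)$ then follow from the step-by-step assertions of the first step.

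The step I expect to be the genuine obstacle is the case analysis underpinning the two maps: over all ordered pairs of colours occurring in \eqref{A}, equivalently the reduced table \eqref{Aa}, one must verify that a merge is triggered in exactly those configurations in which the un-merged pair violates $\gg_c$, that the secondary part resulting from a merge then interacts with its neighbours exactly as the table prescribes, and that the elementary moves of $\Ga$ and $\Gi$ are genuinely mutually inverse. This is where the specific entries of \eqref{A} are forced; once it is in hand, weight preservation, the behaviour of $u$ and $v$, and termination are routine consequences of the dilation \eqref{dila}.
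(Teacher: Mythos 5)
Your overall shape---weight-preserving local moves that merge two conflicting primary parts into a secondary-coloured part, which then migrates through the partition---is the right one, and it is close in spirit to the paper's construction (the paper, following Bressoud, first merges \emph{all} consecutive-for-$>_c$ pairs of the given partition in a single pass, then repeatedly applies one size-adjusting crossing $\Lambda:\Od\times\E\to\E\times\Od$ until $\gg_c$ holds everywhere; your $\Ga$ interleaves these two phases insertion by insertion). The gap is that the step you defer as ``the case analysis'' is not a finite check of colour pairs against \eqref{A}: it is the whole proof, and your description of the inverse is incorrect as stated. When an insertion triggers a merge followed by one or more crossings, the newly created secondary part moves \emph{up} through the partition and every part it passes is pushed down below it; so the bottom part of $\Ga(\Lambda,x)$ is in general neither $x$ nor the secondary part containing $x$. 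In the paper's running example \eqref{exp}, the two smallest parts $4_{\ba}$ and $4_{\ab}$ merge and end up as the part $11_{\aba}$ sitting in the \emph{middle} of $\Phi(\la)=24_{\ab}+21_{\bab}+16_{\ab}+13_{\bab}+11_{\aba}+7_{\ba}+6_{\ab}$, while the bottom part $6_{\ab}$ descends from the original $8_{\ba}$. Hence ``detaching a bottom primary part, or splitting a bottom secondary part'' does not read off the last insertion, and the assertion that $\Gi$ ``recovers the insertion order canonically'' is exactly the claim that must be proved. Supplying it is the technical core of the paper: the position-tracking function $\Delta$ with its Chasles-type additivity, the gauge functions $\phi$ and $\psi$, and the uniqueness-of-final-position results \Prp{pr11} and \Prp{pr21}, which determine from the final configuration alone which parts crossed which. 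Nothing in your proposal plays that role.

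Two further points are unresolved even for the forward map. Your merge-versus-slide rule is unspecified in the one case where it matters: the inserted primary part can conflict with a \emph{secondary} part at the bottom of the current partition, where merging is unavailable (there are no tertiary colours); one then needs a size-adjusting crossing together with a proof---the content of \Prp{pr1} and \Prp{pr2}---that the triggering condition converts a violating pair into a correctly ordered one, that the process terminates, and that no part is driven to a nonpositive size. Positivity is not automatic, since crossings strictly decrease some part sizes, and the paper proves it separately (item 1 in the proofs of \Prp{pr12} and \Prp{pr22}). Also, the theorem lives at the weighted-words level, where $k_a$ and $k_b$ coexist under the order \eqref{ord}; routing the argument through the dilation \eqref{dila} is harmless but does none of the work, and note that the paper obtains Theorem \ref{th2} as the case $m=2$ of Theorem \ref{th3} rather than proving it directly.
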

In terms of $q$-series, we have the equation
\begin{equation}\label{eq111}
\sum_{u,v,n\geq 0}  \sharp \mathcal{D}(u,v,n) a^u b^v q^n = \sum_{u,v,n\geq 0}  \sharp \mathcal{C}(u,v,n) a^u b^v q^n =(-aq;q)_{\infty} (-bq;q)_{\infty}\,\cdot
\end{equation}
Dilating \eqref{A} by \eqref{dila} gives exactly the minimal difference conditions in Siladi\'c's theorem and \eqref{eq111} becomes the generating function for partitions into distinct odd parts, so that \Thm{th1} is a corollary of \Thm{th2}.
By restricting the set of primary colors to $\{a\}$, and applying the transformation $(q,a)\mapsto (q^2,q^{-1})$, one can derive the following analogue of Schur's theorem.
\begin{theo}
Let $n$ be a positive integer. The number of partitions of $n$ into distinct odd parts is equal to the number of partitions of $n$ into distinct positive integers, odd or multiples of $4$, such that 
two consecutive parts differ by at least $4$, and the two multiples of $4$ differ by at least $8$.
\end{theo}
Our purpose here is to build a bijection for a generalization of Dousse's theorem to an arbitrary  number of primary colors.
We consider a set of $m$ primary colors $a_1<\cdots<a_m$. And we order the parts colored by primary colors in the usual way, first according to size and then according to color (see \eqref{odd}). 
We also set $m^2$ secondary colors $a_ia_j$ with $i,j\in \{1,\dots,m\}$, in such a way that  $a_ia_j$ only colors parts with the same parity as $\chi(a_i\leq a_j)$, where
$\chi(A)=1$ if $A$ is true and $\chi(A)=0$ if not (see \eqref{div}).
\\We then extend the partial order $\gg_c$ to parts colored with primary and secondary colors, which corresponds to minimal difference conditions between the parts (see \Sct{cdtdif} and \Sct{genpr}).  This leads to the following theorem.
\begin{theo}\label{th3}
Let $\mathcal{C}(u_1,\dots,u_m,n)$ denote the set of partitions of $n$ with $u_k$ distinct parts with color $a_k$.
Let $\mathcal{D}(u_1,\dots,u_m,n)$ denote the set of partitions of $n$ such that parts are ordered by $\gg_c$, with no part equal to $1_{a_ia_j}$, and with $u_i$ equal to the number of parts colored by $a_i$, $a_ia_j$ or $a_ja_i$ with $i\neq j$, plus twice the number of parts colored by $a_i^2$. We then have
\begin{equation}
\sharp \mathcal{C}(u_1,\dots,u_m,n) = \sharp \mathcal{D}(u_1,\dots,u_m,n)\,\cdot
\end{equation}
\end{theo}
In terms of $q$-series, we have the equation
\begin{equation}
\begin{array}{rcl}
 \displaystyle{\sum_{u_1,\dots,u_m,n\geq 0}} \sharp \mathcal{D}(u_1,\dots,u_m,n) a_1^{u_1}\cdots a_m^{u_m} q^n &=& \displaystyle{\sum_{u_1,\dots,u_m,n\geq 0}}   \sharp \mathcal{C}(u_1,\dots,u_m,n) a_1^{u_1}\cdots a_m^{u_m} q^n\\
 \\
 &=& (-a_1q;q)_{\infty}\cdots (-a_mq;q)_{\infty}\,\cdot
\end{array}
\end{equation}
A complete version of the above theorem is given in \Thm{th4}.
This result may be compared with work of Corteel and Lovejoy \cite{CL06} who gave interpretations of the same infinite products above but using $2^{m} -1$ colors instead of $m^2+m$ colors as we do here.
As an example, we choose $m=3$ and use $a,b,d$ instead of $a_1,a_2,a_3$.
The table which sums up the minimal differences is
\[\begin{footnotesize}
   \begin{array}{|c||c|c|c||c|c|c|c|c|c||c|c|c|}
\hline
_{\lambda_i}\setminus^{\lambda_{i+1}}&a&b&d&a^2&ab&ad&b^2&bd&d^2&ba&da&db\\
\hline\hline
a&2&2&2&2&2&2&2&2&2&1&1&1\\
\hline
b&1&2&2&1&1&1&2&2&2&1&1&1\\
\hline
d&1&1&2&1&1&1&1&1&2&0&1&1\\
\hline\hline
a^2&3&3&3&4&4&4&4&4&4&3&3&3\\
\hline
ab&2&3&3&2&2&2&4&4&4&3&3&3\\
\hline
b^2&2&3&3&2&2&2&4&4&4&3&3&3\\
\hline
ad&2&2&3&2&2&2&2&2&4&1&3&3\\
\hline
bd&2&2&3&2&2&2&2&2&4&1&3&3\\
\hline
d^2&2&2&3&2&2&2&2&2&4&1&3&3\\
\hline\hline
ba&2&2&2&3&3&3&3&3&3&2&2&2\\
\hline
da&2&2&2&3&3&3&3&3&3&2&2&2\\
\hline
db&1&2&2&1&1&1&3&3&3&2&2&2\\
\hline
\end{array}
\,\,\cdot
  \end{footnotesize}
\]
If we take the dilation 
\[
\left\lbrace\begin{array}{l}
q\mapsto q^{10}\\
a \mapsto a q^{-6}\\
b\mapsto b q^{-4}\\
d \mapsto d q^{-1}\\
\end{array}\right.\,\,,
\]
and use the order
\begin{eqnarray}
 1_{ab}<_c 1_{b^2}<_c 1_{ad}<_c 1_{a}<_c 1_{bd}<_c 1_{b}<_c 1_{d^2}<_c 1_{d}<_c\nonumber\\
2_{ba}<_c 2_{da}<_c 2_{a}<_c 2_{db}<_c 2_{b}<_c3_{a^2} <_c2_{d}<_c 3_{ab}<_c\cdots 
\end{eqnarray}
induced by the natural ordering
\begin{eqnarray}
 0_{ab}< 2_{b^2}< 3_{ad}< 4_{a}< 5_{bd}< 6_{b}< 8_{d^2}< 9_{d}<10_{ba}<\nonumber\\
 13_{da}< 14_{a}< 15_{db}< 16_{b}< 18_{a^2} < 19_{d}< 20_{ab}<\cdots \,,
\end{eqnarray}
we have the following corollary in the spirit of Siladi\'c's theorem :
\begin{cor} Let $u,v,w,n$ be non-negative integers. 
Let $A(u,v,w,n)$ denote the number of partitions of $n$ with respectively $u,v,w$ parts congruent to $4,6,9 \mod 10$.
Let $B(u,v,w,n)$ denote the number of partitions  $\lambda_1+\cdots+\lambda_s$ of $n$, with
\begin{itemize}
 \item no part equal to $2,3,5,8$ or congruent to $1,7,11,12,17 \mod 20$,
 \item $u$ equal to the number of parts congruent to $0,3,4\mod 10$ plus twice the number of parts congruent to $18 \mod 20$,
 \item $v$ equal to the number of parts congruent to $0,5,6\mod 10$ plus twice the number of parts congruent to $2 \mod 20$,
 \item $w$ equal to the number of parts congruent to $3,5,9\mod 10$ plus twice the number of parts congruent to $8 \mod 20$,
 \item two consecutive parts differing by at least $9$ with the additional conditions \\for $9\leq \lambda_i-\lambda_{i+1}\leq 20$ according to the table below:
 \begin{equation}
  \begin{array}{|c|c||c|c|}
  \hline
   \lambda_i - \lambda_{i+1}& \lambda_i \mod 20&\lambda_i - \lambda_{i+1}& \lambda_i \mod 20\\
   \hline
   9& 4,19&15&4,5,9,10,14,15,19\\
   10& \emptyset&16&0,4,6,9,10,15,16,19\\
   11& 4,6,10,15&17&0,3,6,10,13,15,16,19\\
   12&6,15,16&18&2,3,4,6,8,13,14,16\\
   13&3,6,9,16,19&19&2,3,4,5,9,13,14,15,18,19\\
   14& 4,9,10,13,19&20&0,3,4,5,6,9,10,13,14,15,16,19\\
   \hline
  \end{array} \cdot
 \end{equation}
\end{itemize}
Then $A(u,v,w,n)=B(u,v,w,n)$.
\end{cor}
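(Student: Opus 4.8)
The plan is to derive the corollary as the image of \Thm{th3}, specialized to $m=3$ with primary colors $a<b<d$, under the stated dilation $q\mapsto q^{10}$, $a\mapsto aq^{-6}$, $b\mapsto bq^{-4}$, $d\mapsto dq^{-1}$; \Thm{th3} carries the combinatorial content, and what is left is to check that the dilation turns its two sides into the families counted by $A(u,v,w,n)$ and $B(u,v,w,n)$. First I would note that the substitution sends an integer $\mu$ colored $c$ to the ordinary integer $10\mu+w_c$, where $w_a=-6$, $w_b=-4$, $w_d=-1$ and $w_{c_ic_j}=w_{c_i}+w_{c_j}$, and check that the fifteen pairs consisting of a primary color together with a parity for $\mu$, or a secondary color together with its forced parity, are carried bijectively onto the fifteen residue classes mod $20$ other than $1,7,11,12,17$. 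In particular the residue mod $20$ of a colored part recovers both its color and the parity of $\mu$; the classes $1,7,11,12,17$ are exactly those attained by no colored part, and the prohibition of $1_{c_ic_j}$ becomes the prohibition of the parts $2,3,5,8$ (the images of $1_{b^2},1_{ad},1_{bd},1_{d^2}$, while $1_{ab}\mapsto 0$ and $1_{a^2}\mapsto -2$ are non-positive and $1_{ba},1_{da},1_{db}$ do not exist). Since a part of color $a_k$ is sent into the class $4$, $6$, $9\pmod{10}$ for $k=1,2,3$, the set $\mathcal{C}(u_1,u_2,u_3,n)$ becomes the one counted by $A(u_1,u_2,u_3,n)$; and since a part of color $a^2$, $b^2$, $d^2$ lands in $18$, $2$, $8\pmod{20}$, one of color $ab$ or $ba$ in $0\pmod{10}$, one of color $ad$ or $da$ in $3\pmod{10}$, and one of color $bd$ or $db$ in $5\pmod{10}$, comparing with the definitions of the $u_i$ and of $u,v,w$ shows that the weights agree.

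It then remains to convert the minimal-difference relation $\gg_c$ into conditions on the dilated parts $\la_i$. I would first refine the $m=3$ table displayed above by parity, exactly as \eqref{A} refines \eqref{Aa}: each entry $D(c,c')$ is replaced, for a pair of parts of parities $p$ and $p'$, by the least integer $\geq D(c,c')$ of the same parity as their difference, giving a refined entry $D'(c,c')$. If colored parts $\mu_i,\mu_{i+1}$ satisfy $\mu_i-\mu_{i+1}\geq D'(c_i,c_{i+1})$, then their images $\la_i=10\mu_i+w_{c_i}$, $\la_{i+1}=10\mu_{i+1}+w_{c_{i+1}}$ satisfy $\la_i-\la_{i+1}\geq\widetilde D$ with $\widetilde D=10\,D'(c_i,c_{i+1})+w_{c_i}-w_{c_{i+1}}$, and conversely. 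Because the residues mod $20$ of $\la_i$ and $\la_{i+1}$ fix their colors, and the residue of $\la_{i+1}$ is that of $\la_i$ minus $\la_i-\la_{i+1}$, the quantity $\widetilde D$ is a function of $\la_i\bmod 20$ and of $\la_i-\la_{i+1}$ alone. One checks $\min\widetilde D=9$ over all color pairs, which gives the blanket bound $\la_i-\la_{i+1}\geq 9$; for each $t\in\{9,\dots,20\}$ the admissible residues of $\la_i$ are precisely those $r$ for which $t\geq\widetilde D$ holds for the colors forced by $(r,t)$, so tabulating over the finitely many such $(r,t)$ reproduces the displayed table. Finally, a short computation gives $\widetilde D\leq 40$ for every pair of colors; since both $\widetilde D$ and $\la_i-\la_{i+1}$ lie in the same residue class mod $20$ (the one fixed by the two colors), any difference $t\geq 9$ that $\gg_c$ forbids obeys $t\leq\widetilde D-20\leq 20$, so no condition survives past $t=20$, as stated.

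I expect the only genuinely delicate point to be this parity refinement: the displayed $m=3$ table, like \eqref{Aa}, suppresses the parity of the parts, so one must keep track---exactly as in the passage between \eqref{A} and \eqref{Aa}---of which parity of $\la_i-\la_{i+1}$ each pair of colors forces, occasionally bumping a tabulated value up by one before applying the dilation. Once that is in place, verifying the displayed $(t,r)$-table and the bound $\widetilde D\leq 40$ is a routine finite check, and the corollary follows from \Thm{th3}.
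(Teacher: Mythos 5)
Your proposal is correct and follows exactly the route the paper intends: the corollary is the $m=3$ case of \Thm{th3} read through the stated dilation, and the paper leaves the residue/weight bookkeeping and the translation of $\gg_c$ into the $(\lambda_i\bmod 20,\,\lambda_i-\lambda_{i+1})$ table as a routine verification, which you carry out. Your parity-refinement rule (replace each reduced entry $D$ by the least integer $\geq D$ of the parity forced by the two colors) is precisely the relationship between \eqref{A} and \eqref{Aa}, and is automatic from the explicit formulas \eqref{oo}--\eqref{ee}, so the argument is complete.
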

As an example, for $n=48$, for $(u,v,w)\notin\{(2,0,0),(3,1,0),(0,3,0),(1,1,2),(0,0,2)\}$, 
$A(u,v,w,48)=0$, and 
\[
\begin{array}{|c||c|c|}
\hline
(u,v,w)&\text{type }A&\text{type }B\\
\hline\hline
(2,0,0)&44+4&44+4\\
\hline
(3,1,0)&24+14+6+4&38+10\\
\hline
(0,3,0)&26+16+6&42+6\\
\hline
(1,1,2)&19+16+9+4,19+14+9+6,29+9+6+4&35+13,33+15,29+15+4\\
\hline
(0,0,2)&39+9,29+19&39+9,48\\
\hline
\end{array}\,\cdot
\]
The remainder of the paper is organized as follows. In the next section, we discuss the existence of a new color $ba$ different from $ab$, that will lead to an enumeration of explicit relations
for the minimal difference conditions in \eqref{Aa}. We also indicate how to generalize this to an arbitrary number of primary colors. In \Sct{bij}, we will build our bijection, and finally in \Sct{prf}, prove its well-definedness.
\section{Preliminaries}\label{prel}
In this section, we examine the difference conditions in \eqref{Aa} and extend them to an arbitrary number of primary colors.
\subsection{A new color $\baba\neq \abab$}\label{ncol}
First, we set  $\ab<\ba$ for the primary colors. Then for any $(k,l,p,q) \in \mathbb{N}^2\times\lbrace \ab,\ba\rbrace^2$, the usual order $>_c$ can be defined by the equivalence
\begin{equation} \label{odd}
k_p>_c l_q \Leftrightarrow k-l \geq \chi(p\leq q)\,\cdot
\end{equation}
At the same time, by observing \eqref{Aa}, the resulting table for primary colors is
\[\begin{array}{|c||c|c|}
   \hline
   _{\lambda_i}\setminus^{\lambda_{i+1}}&\ab&\ba\\
   \hline
   \hline
   \ab&2&2\\
   \hline
   \ba&1&2\\
   \hline
  \end{array}\,,
\]
so that the relation
\begin{equation} \label{Aodd}
k_p\gg_c l_q \Leftrightarrow k-l \geq 1+\chi(p\leq q)\,
\end{equation}
holds for any $(k,l,p,q) \in  {\mathbb{N}^*}^2\times\lbrace \ab,\ba\rbrace^2$.
We set $\delta_{pq}=\chi(p\leq q)$ for notational convenience.
\\\\One can observe that the parts colored by $\aba,\bab,\abab$ can be uniquely divided into two parts $k_p,l_q$ colored by primary colors such that $k_p>_c l_q$ and  $k_p\not \gg_c l_q$, i.e $k-l = \delta_{pq}$. Specifically, we have 
\[
\begin{array}{c}
(2k)_{\abab} = k_{\ba}+k_{\ab}\\
(2k+1)_{\abab} = (k+1)_{\ab}+ k_{\ba}\\
(2k+1)_{\aba} = (k+1)_{\ab}+ k_{\ab}\\
(2k+1)_{\bab} = (k+1)_{\ba}+ k_{\ba}
\end{array}\,\cdot
\]
It is then convenient to set another color $\baba$ so that for any $p,q\in \lbrace\ab,\ba\rbrace$, $pq$ only colors parts with the same parity as $\delta_{pq}$. Futhermore, the following equality holds :
\begin{equation}\label{div}
(2k+\delta_{pq})_{pq} = (k+\delta_{pq})_p+k_q\,\cdot
\end{equation}
For example 
\[\begin{ytableau}
\none $8_{\baba}$&\none $=$
&\none $\baba$&\none &*(foge)  &*(foge) &*(foge)  &*(foge)  &*(blue) &*(blue) &*(blue) &*(blue)&\none $=$&\none$\ba$&\none & *(foge) &*(foge)& *(foge) &*(foge)&\none $+$ &\none $\ab$&\none &*(blue)&*(blue)&*(blue)&*(blue) 
&\none $=$&\none$4_{\ba}$&\none $+$ &\none $4_{\ab}$
\end{ytableau}\,\cdot\]
This means that $\abab\neq \baba$, since $\abab$ colors only odd parts and $\baba$ only even parts. \\The above notations allow us to introduce two sets of parts :
\begin{itemize}
 \item $\mathcal{O} = \mathbb{Z}\times\lbrace \ab,\ba\rbrace$ for parts colored by primary colors, in such a way that $k_p$ is represented by $(k,p)$.
 \item $\mathcal{E} = \mathbb{Z}\times\lbrace \ab,\ba\rbrace^2$,
so that the part $(2k+\delta_{pq})_{pq}$ is uniquely represented by $(k,p,q)$. For example, we have 
\begin{equation}
\begin{array}{ccccc}
\begin{ytableau}\none $\baba$&\none &*(foge)  &*(foge) &*(foge)  &*(foge) &*(blue) &*(blue)&*(blue) &*(blue)\end{ytableau}
&\leftrightarrow&(8)_{\baba}&\leftrightarrow&(4,\ba,\ab)\\
\begin{ytableau}\none $\abab$ &\none&*(blue) &*(blue)&*(blue) &*(blue)&*(blue)&*(foge)  &*(foge) &*(foge)  &*(foge) \end{ytableau}
&\leftrightarrow&(9)_{\abab} &\leftrightarrow&(4,\ab,\ba)\\
\begin{ytableau}\none $\aba$ &\none&*(blue) &*(blue)&*(blue) &*(blue)&*(blue)&*(blue) &*(blue)&*(blue)&*(blue) &*(blue)&*(blue) \end{ytableau}
&\leftrightarrow&(11)_{\aba}& \leftrightarrow&(5,\ab,\ab)\\
\begin{ytableau}\none $\bab$&\none &*(foge) &*(foge)  &*(foge)&*(foge) &*(foge) &*(foge)  &*(foge) \end{ytableau}
&\leftrightarrow&(7)_{\bab} &\leftrightarrow&(3,\ba,\ba)
\end{array}\,\cdot
\end{equation}
\end{itemize}
We remark that we take these sets on $\mathbb{Z}$, while the size of a part is a positive integer. This extension to the integers is meant for easing our bijection in its construction.
\\It is reasonable to set $\gamma,\mu$ to be the functions which map a part of $\mathcal{E}$ to the unique parts in $\mathcal{O}$ as 
\begin{equation}\label{half}
\gamma(k,p,q) = (k+\delta_{pq},p)\,\,,\quad \mu(k,p,q)=(k,q)\,\cdot
\end{equation}
We call $\gamma(k,p,q)$ and $\mu(k,p,q)$ respectively the \textbf{upper} and the \textbf{lower} halves of $(k,p,q)$.
As an example, the part $40_{\abab}$ considered by Dousse will be in fact the part $40_{\baba}$, which we denote $(20,\ba,\ab)$ and which is the sum of the unique parts $20_{\ba}=(20,\ba)$ and $20_{\ab}=(20,\ab)$ respectively as its upper half $\gamma$ and its lower half $\mu$.
\\With this notation, a part $(k,p)\in\mathcal{O}$ has an actual size $k$, while a part $(l,q,r)\in\mathcal{E}$ has $2l+\delta_{qr}$ as its actual size.
\subsection{Explicit relations for the minimal difference conditions}\label{cdtdif}
Since the order $\gg_c$ is defined by the minimal differences between parts in \eqref{Aa}, we can extend it to 
\begin{equation}
 \label{go}
 \mathcal{P} = \mathcal{O}\cup \mathcal{E}\,\cdot
\end{equation}
We just saw in the previous section the necessary and sufficient condition \eqref{Aodd} to have the minimal difference between two parts colored by primary colors. We state it as a lemma.
\begin{lem}
For any $(k,p),(l,q)\in \mathcal{O}^2$, we have 
\begin{equation}\label{oo}
(k,p)\gg_c(l,q)\,\Leftrightarrow \, k-l \geq 1+\delta_{pq}\,\,\cdot
\end{equation}
\end{lem}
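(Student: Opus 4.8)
The plan is to unpack the definition of $\gg_c$ restricted to $\mathcal{O}^2$ and match it against the definition \eqref{delta} of $\delta_{pq}$. By construction, $\gg_c$ is the strict partial order whose minimal differences are recorded in \eqref{Aa}; when both parts involved are colored by primary colors, the secondary colors are irrelevant, so the relevant data is exactly the $2\times 2$ sub-table obtained by keeping only the rows and columns indexed by $a$ and $b$ — namely the table displayed just before \eqref{Aodd}, with entries $2$ for $(a,a)$, $2$ for $(a,b)$, $1$ for $(b,a)$, and $2$ for $(b,b)$. So the first step is simply to record this sub-table and recall that it says precisely that $(k,p)\gg_c(l,q)$ holds exactly when $k-l$ is at least the tabulated value.

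The second step is to evaluate $\delta_{pq}=\chi(p\le q)$ for the four ordered pairs, using the convention $\ab<\ba$ fixed at the beginning of the section: this gives $\delta_{aa}=\delta_{ab}=\delta_{bb}=1$ and $\delta_{ba}=0$. Comparing with the sub-table, one checks case by case that the tabulated minimal difference equals $1+\delta_{pq}$ in every case, which is exactly the already-observed equivalence \eqref{Aodd}. Finally, since the resulting characterization $(k,p)\gg_c(l,q)\Leftrightarrow k-l\ge 1+\delta_{pq}$ is a single linear inequality that makes no reference to the signs of $k$ and $l$, it is the natural extension of $\gg_c$ from ${\mathbb N^*}^2\times\{a,b\}^2$ to all of $\mathcal{O}^2=(\mathbb{Z}\times\{a,b\})^2$, which is how $\gg_c$ is defined on the enlarged part set $\mathcal{P}$; this yields \eqref{oo}.

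There is no real obstacle here beyond careful bookkeeping, and the only point that genuinely warrants attention is the slightly counterintuitive effect of the convention $\ab<\ba$ on the value of $\chi(p\le q)$: the indicator drops to $0$ only for the pair $(p,q)=(b,a)$, and it is exactly that pair whose tabulated minimal difference drops to $1$, so the two effects cancel into the uniform formula $1+\delta_{pq}$. As a consistency check one may also note that \eqref{oo} is the ``one larger gap'' refinement of the usual order \eqref{odd}: replacing $\delta_{pq}$ by $1+\delta_{pq}$ in $k_p>_c l_q\Leftrightarrow k-l\ge\delta_{pq}$ produces exactly $\gg_c$, which is the expected relationship between the weak and strong difference conditions.
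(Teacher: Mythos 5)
Your proposal is correct and matches the paper's own (implicit) argument: the paper simply records this lemma as a restatement of \eqref{Aodd}, which it obtained exactly as you do, by reading off the $2\times2$ primary-color subtable of \eqref{Aa} and checking entrywise that the tabulated minimal difference is $1+\chi(p\leq q)=1+\delta_{pq}$. Your additional remarks on the extension to $\mathbb{Z}$ and the consistency check against $>_c$ are harmless elaborations of the same computation.
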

Now we are going to give analogous conditions for any pair of parts in $\mathcal{O}\times\mathcal{E},\mathcal{E}\times\mathcal{E},\mathcal{E}\times\mathcal{O}$, by giving some explicit expressions of the minimal difference conditions given in \eqref{Aa} according to the colors involved.
\begin{lem}
For any $(k,p),(l,q,r)\in \mathcal{O}\times\mathcal{E}$, we have 
\begin{equation}\label{oe}
(k,p)\gg_c(l,q,r)\,\Leftrightarrow k-(2l+\delta_{qr})\geq \delta_{pq}+\delta_{qr}\,\Leftrightarrow (k,p)>_c(2(l+\delta_{qr}),q)\,\cdot
\end{equation}
\end{lem}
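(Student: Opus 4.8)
The plan is to split the stated chain of equivalences into two independent pieces. The second equivalence, $k-(2l+\delta_{qr})\geq\delta_{pq}+\delta_{qr}\Leftrightarrow(k,p)>_c(2(l+\delta_{qr}),q)$, is purely formal: by the definition \eqref{odd} of $>_c$ one has $(k,p)>_c(m,q)\Leftrightarrow k-m\geq\delta_{pq}$, and substituting the length $m=2(l+\delta_{qr})=2l+2\delta_{qr}$ — which is twice the actual length of the upper half $\gamma(l,q,r)=(l+\delta_{qr},q)$ — rewrites this as $k-(2l+\delta_{qr})\geq\delta_{pq}+\delta_{qr}$. So the whole content of the lemma is the first equivalence, which I would prove by a finite verification against the minimal difference table.

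First I would record the data: the part $(l,q,r)\in\mathcal{E}$ has actual length $N:=2l+\delta_{qr}$ and colour $qr$, while $(k,p)\in\mathcal{O}$ has actual length $k$ and colour $p\in\{a,b\}$. By the definition of $\gg_c$, the relation $(k,p)\gg_c(l,q,r)$ is the assertion that the consecutive parts $\lambda_i=k$ (colour $p$) and $\lambda_{i+1}=N$ (colour $qr$) satisfy $\lambda_i-\lambda_{i+1}\geq D$, where $D$ is the entry of \eqref{Aa} in the row of $p$ and the column of $qr$ — equivalently, the entry of \eqref{A} in the row of $p$ refined by the parity of $k$. The key observation is that $N\equiv\delta_{qr}\pmod 2$, so that once the colour $qr$ is fixed the parity of $k-N$ is determined by the parity of $k$ alone. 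Consequently an inequality $k-N\geq D$ is equivalent to $k-N\geq D'$, where $D'$ is the smallest integer $\geq D$ of the same parity as $k-N$; and the lemma amounts to checking that $D'$ is likewise the smallest integer $\geq\delta_{pq}+\delta_{qr}$ of that parity, i.e. that the tabulated value $D$ and $\delta_{pq}+\delta_{qr}=\chi(p\leq q)+\chi(q\leq r)$ agree after being rounded up to the parity of $k-N$.

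The heart of the proof is then the case analysis over the four secondary colours $qr\in\{a^2,b^2,ab,ba\}$, the two primary colours $p\in\{a,b\}$, and the two parities of $k$. In each case one reads $D$ off the table and compares it with $\delta_{pq}+\delta_{qr}$. For example, with $p=a$ and $qr=a^2$ one has $\delta_{pq}+\delta_{qr}=2$: if $k$ is odd then $k-N$ is even, the tabulated difference is $2$, and $2$ already has the parity of $k-N$; if $k$ is even then $k-N$ is odd, the tabulated difference is $3$, and this is precisely $2$ rounded up to an odd integer. The remaining pairs are treated the same way, the special colour $ba$ (for which $\delta_{ba}=0$ and $N$ is even) being what produces the cases in which the right-hand side drops by one.

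I expect the only real difficulty to be bookkeeping: there is no single uniform inequality, and one has to keep track of the parity of $k$ and, when working from the un-reduced table \eqref{A}, reconcile its two parity-split rows $p_{\mathrm{odd}},p_{\mathrm{even}}$ with the single merged row of \eqref{Aa}, checking in each of the (at most) sixteen cases that the implicit rounding returns exactly $\delta_{pq}+\delta_{qr}$ and not $\delta_{pq}+\delta_{qr}\pm1$. Once this is organized, every case is a one-line verification, and the three displayed forms of the condition coincide.
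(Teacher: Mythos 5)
Your proof is correct and is essentially the paper's argument: the paper likewise disposes of the second equivalence as a rewriting of \eqref{odd} and proves the first by a finite check that the relevant subtable of \eqref{Aa} (rows $a,b$, columns $a^2,b^2,ab_{odd},ab_{even}$, with entries $2,2,2,1$ and $1,2,1,1$) coincides with $\delta_{pq}+\delta_{qr}$. The only difference is that you re-derive the parity reduction from the unreduced table \eqref{A} to \eqref{Aa} inside the proof (hence your sixteen cases instead of eight), whereas the paper takes that reduction as already established in the introduction and reads the answer directly off \eqref{Aa}.
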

\begin{proof}[Proof]
 The subtable resulting from \eqref{Aa}  and corresponding to these differences is 
 \[\begin{array}{|c||c|c|c|c|}
   \hline
   _{\lambda_i}\setminus^{\lambda_{i+1}}&\aba&\bab&\abab&\baba\\
   \hline
   \hline
   \ab&2&2&2&1\\
   \hline
   \ba&1&2&1&1\\
   \hline
  \end{array}\,,
\]
and it is exactly equivalent to the expression $\delta_{pq}+\delta_{qr}$.
\end{proof}
We prove the other lemmas by using the corresponding subtables of \eqref{Aa}.
\begin{lem}
For any $(k,p,q),(l,r)\in \mathcal{E}\times\mathcal{O}$, we have 
\begin{equation}\label{eo}
(k,p,q)\gg_c(l,r)\,\Leftrightarrow (2k+\delta_{pq})-l\geq 1+ \delta_{pq}+\delta_{qr} \Leftrightarrow \, (2k,q)\gg_c (l,r)\,\cdot
\end{equation}
\end{lem}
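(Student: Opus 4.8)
The plan is to proceed exactly as in the proof of \eqref{oe}: isolate the block of the reduced table \eqref{Aa} that governs the pair of colors under consideration, exhibit a closed formula for its entries in terms of the $\delta$-symbols, and then simplify the resulting inequality.

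First I would single out the relevant subtable of \eqref{Aa}. A part of $\mathcal{E}$ of the form $(k,p,q)$ carries one of the colors $a^2=aa$, $ab$, $b^2=bb$ or $ba$ (recall that in the notation of \eqref{A}--\eqref{Aa} the color $ab$ is $ab_{odd}$ and the color $ba$ is $ab_{even}$, since $\delta_{ab}=1$ is odd and $\delta_{ba}=0$ is even), while a part $(l,r)$ of $\mathcal{O}$ carries a primary color $a$ or $b$. Restricting \eqref{Aa} to these four rows and two columns gives
\[
\begin{array}{|c||c|c|}
\hline
_{\lambda_i}\setminus^{\lambda_{i+1}} & a & b\\
\hline\hline
a^2 & 3 & 3\\
\hline
ab & 2 & 3\\
\hline
b^2 & 2 & 3\\
\hline
ba & 2 & 2\\
\hline
\end{array}\,.
\]

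Next I would verify, by inspecting the four admissible pairs $(p,q)\in\{(a,a),(a,b),(b,b),(b,a)\}$ and the two values $r\in\{a,b\}$, that the entry in row $pq$ and column $r$ equals $1+\delta_{pq}+\delta_{qr}$. Since $(k,p,q)$ has actual length $2k+\delta_{pq}$ and $(l,r)$ has actual length $l$, this identification is precisely the statement that $(k,p,q)\gg_c(l,r)$ holds if and only if $(2k+\delta_{pq})-l\geq 1+\delta_{pq}+\delta_{qr}$, which is the first claimed equivalence.

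Finally, cancelling $\delta_{pq}$ from both sides of this inequality leaves $2k-l\geq 1+\delta_{qr}$, and applying \eqref{oo} to the pair $(2k,q),(l,r)\in\mathcal{O}^2$ this is exactly $(2k,q)\gg_c(l,r)$, giving the second equivalence. There is no genuine difficulty here beyond bookkeeping: the points that deserve a little care are keeping the honestly distinct colors $ab$ and $ba$ (of opposite parity) straight against the parity-tagged names $ab_{odd},ab_{even}$ of \eqref{A}, and noticing that the $\delta_{pq}$ contributed by the length of $(k,p,q)$ and the $\delta_{pq}$ contributed by the table entry cancel — it is exactly this cancellation that allows the condition to be rephrased purely in terms of $2k$ and the second color $q$, i.e. in terms of $(2k,q)$.
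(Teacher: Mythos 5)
Your proof is correct and follows exactly the route the paper intends: the paper proves only the $\mathcal{O}\times\mathcal{E}$ case explicitly and states that the remaining lemmas are proved ``by using the corresponding subtables of \eqref{Aa}'', which is precisely what you carry out, and your subtable and the identification of its entries with $1+\delta_{pq}+\delta_{qr}$ check out. The final reduction to $(2k,q)\gg_c(l,r)$ via cancellation of $\delta_{pq}$ and \eqref{oo} is also right.
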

\begin{lem}
For any $(k,p,q),(l,r,s)\in \mathcal{E}^2$, we have 
\begin{equation}\label{ee}
(k,p,q)\gg_c(l,r,s)\,\Leftrightarrow (2k+\delta_{pq})-(2l+\delta_{rs})\geq \delta_{pq}+2\delta_{qr}+\delta_{rs}\,\cdot
\end{equation}
Furthermore, the last equality is equivalent to
\begin{equation}\label{gm}
(k,p,q)\gg_c(l,r,s)\,\Leftrightarrow k-(l+\delta_{rs})\geq \delta_{qr}\,\Leftrightarrow\,\mu(k,p,q)>_c\gamma(l,r,s)\,\cdot
\end{equation}
\end{lem}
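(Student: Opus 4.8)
The plan is to argue exactly as in the proofs of \eqref{oo}, \eqref{oe} and \eqref{eo}: I would first isolate from \eqref{Aa} the $4\times 4$ subtable whose rows and columns are both indexed by the four secondary colors $\aba,\bab,\abab,\baba$ — these appear in \eqref{Aa} as $a^2_{odd},b^2_{odd},ab_{odd},ab_{even}$ — namely
\[
\begin{array}{|c||c|c|c|c|}
\hline
_{\lambda_i}\setminus^{\lambda_{i+1}}&\aba&\bab&\abab&\baba\\
\hline\hline
\aba&4&4&4&3\\
\hline
\bab&2&4&2&3\\
\hline
\abab&2&4&2&3\\
\hline
\baba&3&3&3&2\\
\hline
\end{array}\,\cdot
\]
Writing $\lambda_i=(k,p,q)$ and $\lambda_{i+1}=(l,r,s)$ in $\mathcal{E}$, whose actual lengths are $2k+\delta_{pq}$ and $2l+\delta_{rs}$, the entry in row-color $pq$ and column-color $rs$ gives the condition $(2k+\delta_{pq})-(2l+\delta_{rs})\geq g(pq,rs)$, and \eqref{ee} is the assertion that $g(pq,rs)=\delta_{pq}+2\delta_{qr}+\delta_{rs}$; so everything reduces to checking that each of the sixteen entries of the subtable equals $\delta_{pq}+2\delta_{qr}+\delta_{rs}$.

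That finite check shortens considerably once one notices that $\delta_{pq}+2\delta_{qr}+\delta_{rs}$ depends on $(p,q)$ only through the pair $(\delta_{pq},q)$ and on $(r,s)$ only through $(r,\delta_{rs})$. Consequently the rows of the subtable indexed by $\bab$ and $\abab$ coincide (both have $\delta_{pq}=1$, $q=b$) and so do the columns indexed by $\aba$ and $\abab$ (both have $r=a$, $\delta_{rs}=1$), leaving genuinely distinct row-types $(\delta_{pq},q)\in\{(1,a),(1,b),(0,a)\}$ and column-types $(r,\delta_{rs})\in\{(a,1),(b,1),(b,0)\}$ — so only nine evaluations of $\delta_{pq}+2\delta_{qr}+\delta_{rs}$ are needed, and each reproduces the tabulated value. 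This establishes \eqref{ee}.

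For \eqref{gm} I would simply rearrange \eqref{ee}: subtracting $\delta_{pq}+\delta_{rs}$ from both sides turns it into $2k-2l-2\delta_{rs}\geq 2\delta_{qr}$, hence $k-(l+\delta_{rs})\geq\delta_{qr}$, which is the middle inequality of \eqref{gm}. Finally, by \eqref{half} we have $\mu(k,p,q)=(k,q)$ and $\gamma(l,r,s)=(l+\delta_{rs},r)$, so the definition \eqref{odd} of $>_c$ together with \eqref{delta} gives $\mu(k,p,q)>_c\gamma(l,r,s)\Leftrightarrow k-(l+\delta_{rs})\geq\chi(q\leq r)=\delta_{qr}$, which closes the chain of equivalences.

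I do not expect a real obstacle here: the argument is a bounded case check followed by one line of algebra. The one point worth being careful about is the coefficient $2$ in front of $\delta_{qr}$ — it is precisely this doubled middle term that makes the "lower half of $\lambda_i$ exceeds the upper half of $\lambda_{i+1}$" reformulation \eqref{gm} drop out, and I would want to confirm it directly against the subtable rather than extrapolate the pattern from a few entries.
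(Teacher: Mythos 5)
Your proof is correct and follows exactly the route the paper indicates (``by using the corresponding subtables of \eqref{Aa}''): you extract the right $4\times 4$ subtable, verify that every entry equals $\delta_{pq}+2\delta_{qr}+\delta_{rs}$, and then obtain \eqref{gm} by dividing the rearranged inequality by $2$ and unwinding the definitions of $\mu$, $\gamma$ and $>_c$. The symmetry observation reducing sixteen checks to nine is a nice touch, and all tabulated values do match.
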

Condition \eqref{gm} is the most important in our construction. This comes from the fact that comparing two parts in $\mathcal{E}$ in terms of $\gg_c$  is the same as comparing the lower half of the first part and the upper half of the second part using $>_c$.
\subsection{Generalization to an arbitrary number of primary colors}\label{genpr}
The most important fact in our analysis of the colored parts  in the previous subsection is the order between primary colors and not their number. In fact, this extends immediately to a set of primary colors $\{A_1,\ldots,a_m\}$ as follows. After ordering $a_1<\dots<a_m$, we set
\begin{equation}\label{generalo}
 \mathcal{O} = \mathbb{Z}\times \lbrace a_1,\dots ,a_m\rbrace\,,\, \mathcal{O}' = \mathbb{Z}_{>0} \lbrace a_1,\dots ,a_m\rbrace
\end{equation}
for the parts with primary colors and 
\begin{equation}\label{generale}
 \mathcal{E} = \mathbb{Z}\times \lbrace a_1,\dots ,a_m\rbrace^2\,,\, \mathcal{E}' = \mathbb{Z}_{>0} \lbrace a_1,\dots ,a_m\rbrace^2
\end{equation}
for the parts with secondary colors, whose size and color are defined exactly by \eqref{div}, and upper and whose lower halves are defined by \eqref{half}. We can then define the usual order $>_c$ described in \eqref{odd} and use the lemmas of \Sct{cdtdif} as definitions of $\gg_c$.
We now explicitly state the generalization of Siladi\'c's theorem
\begin{theo}\label{th4}
Let $\mathcal{C}(u_1,\dots,u_m,n)$ denote the set of partitions of $n$ with $u_k$ distinct primary parts in parts $\mathcal{O}'$ with color $a_k$.
Let $\mathcal{D}(u_1,\dots,u_m,n)$ denote the set of partitions of $n$ with parts in $\mathcal{O}'\cup \mathcal{E}'$ satisfying the minimal difference condition defined by $\gg_c$ in \eqref{oo}, \eqref{oe}, \eqref{eo} and \eqref{ee},
and with $u_i$ equal to the number of parts colored by $a_i$, $a_ia_j$ or $a_ja_i$ with $i\neq j$, plus twice the number of parts colored by $a_i^2$. We then have
\begin{equation}
\sharp \mathcal{C}(u_1,\dots,u_m,n) = \sharp \mathcal{D}(u_1,\dots,u_m,n)\,\cdot
\end{equation}
\end{theo}
\section{How do we build the bijection?}\label{bij}
We build our bijection for \Thm{th4} in the spirit of the bijective proof of the partition theorem of K. Alladi \cite{Alladi99} given by Padmavathamma, R. Raghavendra and B. M. Chandrashekara \cite{PRC04}.
The idea was introduced by Bressoud \cite{BR80}  in his bijective proof of Schur's theorem.
\subsection{The bijection's key operation $\Lambda$}\label{kop}
Let us define the operation $\Lambda$ as 
\begin{equation}
\begin{array}{l c r c l}
\Lambda&:&\mathcal{O}\times\mathcal{E}&\longrightarrow&\mathcal{E}\times\mathcal{O}\\
&&(k,p),(l,q,r)&\longmapsto& (l+\delta_{qr},p,q),(k-\delta_{pq}-\delta_{qr},r)\\
\end{array}\,\,\cdot
\end{equation}
The function $\Lambda$ is invertible with 
\begin{equation}
\begin{array}{l c r c l}
\Lambda^{-1}&:&\mathcal{E}\times\mathcal{O}&\longrightarrow&\mathcal{O}\times\mathcal{E}\\
&&(l',p,q),(k',r)&\longmapsto& (k'+\delta_{pq}+\delta_{qr},p),(l'-\delta_{qr},q,r)\\
\end{array}\,\,\cdot
\end{equation}
For two colors $\ab,\ba$, we explicitly have the following table for $\Lambda$:
\begin{footnotesize}
\[
\begin{array}{|c||c|c|c|c|}
\hline
_{(k,p)}\times^{(l,q,r)}&(l,\ab,\ab)&(l,\ab,\ba)&(l,\ba,\ab)&(l,\ba,\ba)\\
\hline\hline
(k,\ab)&(l+1,\ab,\ab),(k-2,\ab)&(l+1,\ab,\ab),(k-2,\ba)&(l,\ab,\ba),(k-1,\ab)&(l+1,\ab,\ba),(k-2,\ba)\\
\hline
(k,\ba)&(l+1,\ba,\ab),(k-1,\ab)&(l+1,\ba,\ab),(k-1,\ba)&(l,\ba,\ba),(k-1,\ab)&(l+1,\ba,\ba),(k-2,\ba)\\
\hline
\end{array}
\]
\end{footnotesize}
and with the actual sizes 
\begin{equation}
\begin{small}
\begin{array}{|c||c|c|c|c|}
\hline
_{k_p}\times^{(2l+\delta_{qr})_{qr}}&(2l+1)_{\aba}&(2l+1)_{\abab}&(2l)_{\baba}&(2l+1)_{\bab}\\
\hline\hline
k_{\ab}&(2l+3)_{\aba},(k-2)_{\ab}&(2l+3)_{\aba},(k-2)_{\ba}&(2l+1)_{\abab},(k-1)_{\ab}&(2l+3)_{\abab},(k-2)_{\ba}\\
\hline
k_{\ba}&(2l+2)_{\baba},(k-1)_{\ab}&(2l+2)_{\baba},(k-1)_{\ba}&(2l+1)_{\bab},(k-1)_{\ab}&(2l+3)_{\bab},(k-2)_{\ba}\\
\hline
\end{array}\,\cdot
\end{small}
\end{equation}
By considering the upper and lower halves, we have the following transformation for $\Lambda$
\begin{equation}\label{mov}
 \begin{array}{ccc}
(k,p)&(l+\delta_{qr},q)&(l,r)\\
\rightarrow\rightarrow&\leftarrow&\leftarrow\\
(l+\delta_{pq}+\delta_{qr},p)&(l+\delta_{qr},q)&(k-\delta_{pq}-\delta_{qr},r)
\end{array}\,\,\,,
\end{equation}
and a similar  transformation for $\Lambda^{-1}$
\begin{equation}\label{mov1}
 \begin{array}{ccc}
(l'+\delta_{pq},p)&(l',q)&(k',r)\\
\rightarrow&\rightarrow&\leftarrow\leftarrow\\
(k'+\delta_{pq}+\delta_{qr},p)&(l',q)&(l'-\delta_{qr},r)
\end{array}\,\,\,\cdot
\end{equation}
Observe that the sum of the sizes is conserved by $\Lambda$ and $\Lambda^{-1}$, and the same goes for the sequence of primary colors of the parts. Two more properties of the operation $\Lambda$ are important in the construction of our bijection.
\begin{prop}\label{pr1}
 For any $(k,p),(l,q,r)\in \mathcal{O}\times\mathcal{E}$,
\begin{equation}
(k,p)\not\gg_c(l,q,r) \Leftrightarrow (l+\delta_{qr},p,q)\gg_c (k-\delta_{pq}-\delta_{qr},r)\,\cdot
\end{equation}
This means  applying $\Lambda$ turns a pair of primary-colored and secondary-colored parts not satisfying  $\gg_c$ into a new pair that does.
\end{prop}
\begin{proof}[Proof]
 By \eqref{oe}, the left side is equivalent to 
 \[k-(2l+\delta_{qr})<\delta_{pq}+\delta_{qr} \Leftrightarrow k -2l \leq -1+\delta_{pq}+2\delta_{qr}\]
 while by \eqref{eo}, the right side means
 \[(2l+2\delta_{qr}+\delta_{pq})-(k-\delta_{pq}-\delta_{qr})\geq 1+\delta_{pq}+\delta_{qr} \Leftrightarrow 2l-k \geq 1-\delta_{pq}-2\delta_{qr}\,\cdot\]
\end{proof}
\begin{prop}\label{pr2}
 For any $(k,p,q),(l,r)\in \mathcal{E}\times\mathcal{O}$,
\begin{equation}
\mu(k,p,q)\not >_c (l,r) \Leftrightarrow (l+\delta_{pq}+\delta_{qr},p) \gg_c \gamma(k-\delta_{qr},q,r)\,\cdot
\end{equation}
\end{prop}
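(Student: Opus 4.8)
The plan is to prove the equivalence by reducing both sides to one and the same inequality between integers, in the same spirit as the proof of \Prp{pr1}. The starting point is a cancellation coming from \eqref{half}: since $\gamma(k-\delta_{qr},q,r)=(k-\delta_{qr}+\delta_{qr},q)=(k,q)$ and $\mu(k,p,q)=(k,q)$, we have $\gamma(k-\delta_{qr},q,r)=\mu(k,p,q)$. In other words, the upper half of the secondary part produced by $\Lambda^{-1}$ is precisely the lower half of the secondary part it consumes (this is the middle column of \eqref{mov1}). Hence the right-hand side of the proposition is the relation $(l+\delta_{pq}+\delta_{qr},p)\gg_c(k,q)$ between two elements of $\mathcal{O}$.

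With this in hand, the rest is a short manipulation of the symbols $\delta$. For the right-hand side, \eqref{oo} gives that $(l+\delta_{pq}+\delta_{qr},p)\gg_c(k,q)$ holds iff $(l+\delta_{pq}+\delta_{qr})-k\geq 1+\delta_{pq}$, i.e., after cancelling $\delta_{pq}$, iff $k-l\leq\delta_{qr}-1$. For the left-hand side, both $\mu(k,p,q)=(k,q)$ and $(l,r)$ lie in $\mathcal{O}$, so the definition \eqref{odd} of $>_c$ gives $\mu(k,p,q)>_c(l,r)\Leftrightarrow k-l\geq\delta_{qr}$; negating, and using that the quantities involved are integers, $\mu(k,p,q)\not >_c(l,r)\Leftrightarrow k-l\leq\delta_{qr}-1$. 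Since both sides are equivalent to $k-l\leq\delta_{qr}-1$, the proposition follows.

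I do not anticipate a genuine obstacle: once the identity $\gamma(k-\delta_{qr},q,r)=\mu(k,p,q)$ is noticed, both equivalences are one-line computations, and the argument works verbatim for an arbitrary number of primary colors, since it only uses \eqref{oo}, \eqref{odd} and \eqref{half}. The real content is structural rather than computational — $\Lambda^{-1}$ is designed so that the half it creates above coincides with the half it destroys below — and together with \Prp{pr1} this is what will make the bijection of \Sct{bij} compatible with the order $\gg_c$.
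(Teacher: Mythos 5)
Your proof is correct and follows essentially the same route as the paper's: both reduce each side to the single inequality $k-l\leq \delta_{qr}-1$ by observing that $\mu(k,p,q)=(k,q)=\gamma(k-\delta_{qr},q,r)$ and then applying \eqref{odd} and \eqref{oo}. The only difference is that you make the cancellation $\gamma(k-\delta_{qr},q,r)=\mu(k,p,q)$ explicit as a structural remark, which the paper leaves implicit.
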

\begin{proof}[Proof]
 The left side is equivalent to \[(k,q) \not>_c (l,r) \Leftrightarrow k-l<\delta_{qr} \Leftrightarrow k-l \leq -1+\delta_{qr}\]
 and for the right side, we have 
 \[(l+\delta_{pq}+\delta_{qr},p)\gg_c (k,q)\Leftrightarrow(l+\delta_{pq}+\delta_{qr})-k\geq 1+\delta_{pq} \Leftrightarrow l-k \geq 1 - \delta_{qr}\,\cdot\]
\end{proof}
\subsection{Bijective maps}
Let us define $\mathcal{P}' = \mathcal{O}'\cup \mathcal{E}'$.
\begin{itemize}
 \item Denote by $\mathcal{C}$ the set of partitions with parts in primary colors, i.e with parts in $\mathcal{O}'$. We can then view $\mathcal{C}$ as the set of all finite decreasing chains of the totally ordered set $(\mathcal{O}',>_c)$.
 \item Let $\mathcal{D}$  denote the set of partitions with parts in $\mathcal{P'}$
such that the colored parts are ordered by $\gg_c$. Here again, $\mathcal{D}$ is the set of all finite decreasing chains of the poset $(\mathcal{P}',\gg_c)$. Observe that a part $(k,p,q)\in\mathcal{E}'$ has an actual size $2k+\delta_{pq}\geq 2$, so that there is no secondary part of size $1$.
\end{itemize}
We shall define a suitable mapping $\Phi$ from $\mathcal{C}$ to $\mathcal{D}$ and suitable mapping $\Psi$ from $\mathcal{D}$ to $\mathcal{C}$.
\subsubsection{How to compute $\Phi:\mathcal{C}\rightarrow\mathcal{D}$}\label{phii}
Let us take any $\lambda = \lambda_1+\cdots+\lambda_s$ in $\mathcal{C}$, with $\lambda_1>_c\cdots >_c \lambda_s$. We then have $\lambda_i\in\mathcal{O}'$ for any $i\in\lbrace 1,\dots,s\rbrace$.
As an example, we take 
\begin{equation}\label{exp}
\lambda = 24_{\ab} + 17_{\ba}+ 11_{\ba}+10_{\ab}+9_{\ba}+8_{\ba}+6_{\ab}+5_{\ab}+4_{\ba}+4_{\ab}\,\,,
\end{equation}
\begin{center}
\begin{ytableau}
\none $\ab$&\none&*(blue)&*(blue)&*(blue)&*(blue)&*(blue)&*(blue)&*(blue)&*(blue)&*(blue)&*(blue)&*(blue)&*(blue)&*(blue)&*(blue)&*(blue)&*(blue)&*(blue)&*(blue)&*(blue)&*(blue)&*(blue)&*(blue)&*(blue)&*(blue)  \\
\none $\ba$&\none&*(foge)&*(foge)&*(foge)&*(foge)&*(foge)&*(foge)&*(foge)&*(foge)&*(foge)&*(foge)&*(foge)&*(foge)&*(foge)&*(foge)&*(foge)&*(foge)&*(foge) \\
\none $\ba$&\none&*(foge)&*(foge)&*(foge)&*(foge)&*(foge)&*(foge)&*(foge)&*(foge)&*(foge)&*(foge)&*(foge)\\
\none $\ab$&\none&*(blue)&*(blue)&*(blue)&*(blue)&*(blue)&*(blue)&*(blue)&*(blue)&*(blue)&*(blue)\\
\none $\ba$&\none&*(foge)&*(foge)&*(foge)&*(foge)&*(foge)&*(foge)&*(foge)&*(foge)&*(foge)\\
\none $\ba$&\none&*(foge)&*(foge)&*(foge)&*(foge)&*(foge)&*(foge)&*(foge)&*(foge)\\
\none $\ab$&\none&*(blue)&*(blue)&*(blue)&*(blue)&*(blue)&*(blue)\\
\none $\ab$&\none&*(blue)&*(blue)&*(blue)&*(blue)&*(blue)\\
\none $\ba$&\none&*(foge)&*(foge)&*(foge)&*(foge)\\
\none $\ab$&\none&*(blue)&*(blue)&*(blue)&*(blue)
\end{ytableau}
 
\end{center}
\begin{itemize}
\item[\Soo:] First, we identify the \textbf{consecutive} troublesome pairs of parts, i.e $(\lambda_i,\lambda_{i+1})$ such that $\lambda_i\not\gg_c\lambda_{i+1}$, by taking consecutively the greatest pairs in terms of size, in such a way that they are disjoint. In our example, we have 
\begin{equation}
\lambda = 24_{\ab} + 17_{\ba}+ 11_{\ba}+\underline{10_{\ab}+9_{\ba}}+8_{\ba}+\underline{6_{\ab}+5_{\ab}}+\underline{4_{\ba}+4_{\ab}}\,\,\cdot
\end{equation}
Then we simply replace them by the corresponding parts in $\mathcal{E}'$ using \eqref{half}. We denote the resulting partition by $\lambda' = \lambda'_1+\cdots+\lambda'_{s'}$ with parts with the exact order by just replacing the pairs (parts are no longer ordered here). Our example gives
\begin{equation}
\lambda' = 24_{\ab}+17_{\ba}+11_{\ba}+19_{\abab}+8_{\ba}+11_{\aba}+8_{\baba}\,,
\end{equation}
\begin{center}
\begin{ytableau}
\none $\ab$&\none&*(blue)&*(blue)&*(blue)&*(blue)&*(blue)&*(blue)&*(blue)&*(blue)&*(blue)&*(blue)&*(blue)&*(blue)&*(blue)&*(blue)&*(blue)&*(blue)&*(blue)&*(blue)&*(blue)&*(blue)&*(blue)&*(blue)&*(blue)&*(blue)  \\
\none $\ba$&\none&*(foge)&*(foge)&*(foge)&*(foge)&*(foge)&*(foge)&*(foge)&*(foge)&*(foge)&*(foge)&*(foge)&*(foge)&*(foge)&*(foge)&*(foge)&*(foge)&*(foge) \\
\none $\ba$&\none&*(foge)&*(foge)&*(foge)&*(foge)&*(foge)&*(foge)&*(foge)&*(foge)&*(foge)&*(foge)&*(foge)\\
\none $\abab$&\none&*(blue)&*(blue)&*(blue)&*(blue)&*(blue)&*(blue)&*(blue)&*(blue)&*(blue)&*(blue)&*(foge)&*(foge)&*(foge)&*(foge)&*(foge)&*(foge)&*(foge)&*(foge)&*(foge)\\
\none $\ba$&\none&*(foge)&*(foge)&*(foge)&*(foge)&*(foge)&*(foge)&*(foge)&*(foge)\\
\none $\aba$&\none&*(blue)&*(blue)&*(blue)&*(blue)&*(blue)&*(blue)&*(blue)&*(blue)&*(blue)&*(blue)&*(blue)\\
\none $\baba$&\none&*(foge)&*(foge)&*(foge)&*(foge)&*(blue)&*(blue)&*(blue)&*(blue)\\
\end{ytableau}
\end{center}

\item[\Stt:]As long as there exists $i\in\lbrace 1,\dots,s'-1\rbrace$ such that 
$\lambda'_i,\lambda'_{i+1} \in \mathcal{O}\times\mathcal{E}
$
and $\lambda'_i\not\gg_c\lambda'_{i+1}$, we just replace them by 
$\Lambda(\lambda'_i,\lambda'_{i+1}) \in \mathcal{E}\times\mathcal{O}\,$
in this order. By \Prp{pr1}, this means that we replace a pair which doesn't respect the order $\gg_c$ by a new one which does.
If we proceed in our example by choosing the smallest $i$ at each step, we have
\ytableausetup{centertableaux,boxsize=0.2cm}
\begin{center}
\begin{tiny}
\begin{ytableau}
\none $\ab$&\none&*(blue)&*(blue)&*(blue)&*(blue)&*(blue)&*(blue)&*(blue)&*(blue)&*(blue)&*(blue)&*(blue)&*(blue)&*(blue)&*(blue)&*(blue)&*(blue)&*(blue)&*(blue)&*(blue)&*(blue)&*(blue)&*(blue)&*(blue)&*(blue)  \\
\none $\ba$&\none&*(foge)&*(foge)&*(foge)&*(foge)&*(foge)&*(foge)&*(foge)&*(foge)&*(foge)&*(foge)&*(foge)&*(foge)&*(foge)&*(foge)&*(foge)&*(foge)&*(foge) \\
\none $\ba\ast$&\none&*(foge)&*(foge)&*(foge)&*(foge)&*(foge)&*(foge)&*(foge)&*(foge)&*(foge)&*(foge)&*(foge)\\
\none $\abab\ast$&\none&*(blue)&*(blue)&*(blue)&*(blue)&*(blue)&*(blue)&*(blue)&*(blue)&*(blue)&*(blue)&*(foge)&*(foge)&*(foge)&*(foge)&*(foge)&*(foge)&*(foge)&*(foge)&*(foge)\\
\none $\ba$&\none&*(foge)&*(foge)&*(foge)&*(foge)&*(foge)&*(foge)&*(foge)&*(foge)\\
\none $\aba$&\none&*(blue)&*(blue)&*(blue)&*(blue)&*(blue)&*(blue)&*(blue)&*(blue)&*(blue)&*(blue)&*(blue)\\
\none $\baba$&\none&*(foge)&*(foge)&*(foge)&*(foge)&*(blue)&*(blue)&*(blue)&*(blue)\\
\end{ytableau}
\end{tiny}
$\quad\longrightarrow\quad$
\begin{tiny}
\begin{ytableau}
\none $\ab$&\none&*(blue)&*(blue)&*(blue)&*(blue)&*(blue)&*(blue)&*(blue)&*(blue)&*(blue)&*(blue)&*(blue)&*(blue)&*(blue)&*(blue)&*(blue)&*(blue)&*(blue)&*(blue)&*(blue)&*(blue)&*(blue)&*(blue)&*(blue)&*(blue)  \\
\none $\ba\ast$&\none&*(foge)&*(foge)&*(foge)&*(foge)&*(foge)&*(foge)&*(foge)&*(foge)&*(foge)&*(foge)&*(foge)&*(foge)&*(foge)&*(foge)&*(foge)&*(foge)&*(foge) \\
\none $\baba\ast$&\none&*(foge)&*(foge)&*(foge)&*(foge)&*(foge)&*(foge)&*(foge)&*(foge)&*(foge)&*(foge)&*(blue)&*(blue)&*(blue)&*(blue)&*(blue)&*(blue)&*(blue)&*(blue)&*(blue)&*(blue)\\
\none $\ba$&\none&*(foge)&*(foge)&*(foge)&*(foge)&*(foge)&*(foge)&*(foge)&*(foge)&*(foge)&*(foge)\\
\none $\ba$&\none&*(foge)&*(foge)&*(foge)&*(foge)&*(foge)&*(foge)&*(foge)&*(foge)\\
\none $\aba$&\none&*(blue)&*(blue)&*(blue)&*(blue)&*(blue)&*(blue)&*(blue)&*(blue)&*(blue)&*(blue)&*(blue)\\
\none $\baba$&\none&*(foge)&*(foge)&*(foge)&*(foge)&*(blue)&*(blue)&*(blue)&*(blue)\\
\end{ytableau}
\end{tiny}
\[\qquad\qquad\qquad\qquad\qquad\qquad\qquad\qquad\qquad\qquad\qquad\qquad\downarrow\]
\begin{tiny}
\begin{ytableau}
\none $\ab$&\none&*(blue)&*(blue)&*(blue)&*(blue)&*(blue)&*(blue)&*(blue)&*(blue)&*(blue)&*(blue)&*(blue)&*(blue)&*(blue)&*(blue)&*(blue)&*(blue)&*(blue)&*(blue)&*(blue)&*(blue)&*(blue)&*(blue)&*(blue)&*(blue)  \\
\none $\bab$&\none&*(foge)&*(foge)&*(foge)&*(foge)&*(foge)&*(foge)&*(foge)&*(foge)&*(foge)&*(foge)&*(foge)&*(foge)&*(foge)&*(foge)&*(foge)&*(foge)&*(foge)&*(foge)&*(foge)&*(foge)&*(foge)\\
\none $\ab$&\none&*(blue)&*(blue)&*(blue)&*(blue)&*(blue)&*(blue)&*(blue)&*(blue)&*(blue)&*(blue)&*(blue)&*(blue)&*(blue)&*(blue)&*(blue)&*(blue)\\
\none $\ba\ast$&\none&*(foge)&*(foge)&*(foge)&*(foge)&*(foge)&*(foge)&*(foge)&*(foge)&*(foge)&*(foge)\\
\none $\baba\ast$&\none&*(foge)&*(foge)&*(foge)&*(foge)&*(foge)&*(foge)&*(blue)&*(blue)&*(blue)&*(blue)&*(blue)&*(blue)\\
\none $\ab$&\none&*(blue)&*(blue)&*(blue)&*(blue)&*(blue)&*(blue)&*(blue)\\
\none $\baba$&\none&*(foge)&*(foge)&*(foge)&*(foge)&*(blue)&*(blue)&*(blue)&*(blue)\\
\end{ytableau}
\end{tiny}
$\quad\longleftarrow\quad$
\begin{tiny}
\begin{ytableau}
\none $\ab$&\none&*(blue)&*(blue)&*(blue)&*(blue)&*(blue)&*(blue)&*(blue)&*(blue)&*(blue)&*(blue)&*(blue)&*(blue)&*(blue)&*(blue)&*(blue)&*(blue)&*(blue)&*(blue)&*(blue)&*(blue)&*(blue)&*(blue)&*(blue)&*(blue)  \\
\none $\bab$&\none&*(foge)&*(foge)&*(foge)&*(foge)&*(foge)&*(foge)&*(foge)&*(foge)&*(foge)&*(foge)&*(foge)&*(foge)&*(foge)&*(foge)&*(foge)&*(foge)&*(foge)&*(foge)&*(foge)&*(foge)&*(foge)\\
\none $\ab$&\none&*(blue)&*(blue)&*(blue)&*(blue)&*(blue)&*(blue)&*(blue)&*(blue)&*(blue)&*(blue)&*(blue)&*(blue)&*(blue)&*(blue)&*(blue)&*(blue)\\
\none $\ba$&\none&*(foge)&*(foge)&*(foge)&*(foge)&*(foge)&*(foge)&*(foge)&*(foge)&*(foge)&*(foge)\\
\none $\ba\ast$&\none&*(foge)&*(foge)&*(foge)&*(foge)&*(foge)&*(foge)&*(foge)&*(foge)\\
\none $\aba\ast$&\none&*(blue)&*(blue)&*(blue)&*(blue)&*(blue)&*(blue)&*(blue)&*(blue)&*(blue)&*(blue)&*(blue)\\
\none $\baba$&\none&*(foge)&*(foge)&*(foge)&*(foge)&*(blue)&*(blue)&*(blue)&*(blue)\\
\end{ytableau}
\end{tiny}
\[\downarrow\qquad\qquad\qquad\qquad\qquad\qquad\qquad\qquad\qquad\qquad\qquad\qquad\]
\begin{tiny}
\begin{ytableau}
\none $\ab$&\none&*(blue)&*(blue)&*(blue)&*(blue)&*(blue)&*(blue)&*(blue)&*(blue)&*(blue)&*(blue)&*(blue)&*(blue)&*(blue)&*(blue)&*(blue)&*(blue)&*(blue)&*(blue)&*(blue)&*(blue)&*(blue)&*(blue)&*(blue)&*(blue)  \\
\none $\bab$&\none&*(foge)&*(foge)&*(foge)&*(foge)&*(foge)&*(foge)&*(foge)&*(foge)&*(foge)&*(foge)&*(foge)&*(foge)&*(foge)&*(foge)&*(foge)&*(foge)&*(foge)&*(foge)&*(foge)&*(foge)&*(foge)\\
\none $\ab$&\none&*(blue)&*(blue)&*(blue)&*(blue)&*(blue)&*(blue)&*(blue)&*(blue)&*(blue)&*(blue)&*(blue)&*(blue)&*(blue)&*(blue)&*(blue)&*(blue)\\
\none $\bab$&\none&*(foge)&*(foge)&*(foge)&*(foge)&*(foge)&*(foge)&*(foge)&*(foge)&*(foge)&*(foge)&*(foge)&*(foge)&*(foge)\\
\none $\ab$&\none&*(blue)&*(blue)&*(blue)&*(blue)&*(blue)&*(blue)&*(blue)&*(blue)&*(blue)\\
\none $\ab\ast$&\none&*(blue)&*(blue)&*(blue)&*(blue)&*(blue)&*(blue)&*(blue)\\
\none $\baba\ast$&\none&*(foge)&*(foge)&*(foge)&*(foge)&*(blue)&*(blue)&*(blue)&*(blue)\\
\end{ytableau}
\end{tiny}
$\quad\longrightarrow\quad$
\begin{tiny}
\begin{ytableau}
\none $\ab$&\none&*(blue)&*(blue)&*(blue)&*(blue)&*(blue)&*(blue)&*(blue)&*(blue)&*(blue)&*(blue)&*(blue)&*(blue)&*(blue)&*(blue)&*(blue)&*(blue)&*(blue)&*(blue)&*(blue)&*(blue)&*(blue)&*(blue)&*(blue)&*(blue)  \\
\none $\bab$&\none&*(foge)&*(foge)&*(foge)&*(foge)&*(foge)&*(foge)&*(foge)&*(foge)&*(foge)&*(foge)&*(foge)&*(foge)&*(foge)&*(foge)&*(foge)&*(foge)&*(foge)&*(foge)&*(foge)&*(foge)&*(foge)\\
\none $\ab$&\none&*(blue)&*(blue)&*(blue)&*(blue)&*(blue)&*(blue)&*(blue)&*(blue)&*(blue)&*(blue)&*(blue)&*(blue)&*(blue)&*(blue)&*(blue)&*(blue)\\
\none $\bab$&\none&*(foge)&*(foge)&*(foge)&*(foge)&*(foge)&*(foge)&*(foge)&*(foge)&*(foge)&*(foge)&*(foge)&*(foge)&*(foge)\\
\none $\ab\ast$&\none&*(blue)&*(blue)&*(blue)&*(blue)&*(blue)&*(blue)&*(blue)&*(blue)&*(blue)\\
\none $\abab\ast$&\none&*(blue)&*(blue)&*(blue)&*(blue)&*(blue)&*(foge)&*(foge)&*(foge)&*(foge)\\
\none $\ab$&\none&*(blue)&*(blue)&*(blue)&*(blue)&*(blue)&*(blue)\\
\end{ytableau}
\end{tiny}
\[\qquad\qquad\qquad\qquad\swarrow\]
\ytableausetup{centertableaux,boxsize=0.31cm}
 \begin{ytableau}
\none $\ab$&\none&*(blue)&*(blue)&*(blue)&*(blue)&*(blue)&*(blue)&*(blue)&*(blue)&*(blue)&*(blue)&*(blue)&*(blue)&*(blue)&*(blue)&*(blue)&*(blue)&*(blue)&*(blue)&*(blue)&*(blue)&*(blue)&*(blue)&*(blue)&*(blue)  \\
\none $\bab$&\none&*(foge)&*(foge)&*(foge)&*(foge)&*(foge)&*(foge)&*(foge)&*(foge)&*(foge)&*(foge)&*(foge)&*(foge)&*(foge)&*(foge)&*(foge)&*(foge)&*(foge)&*(foge)&*(foge)&*(foge)&*(foge)\\
\none $\ab$&\none&*(blue)&*(blue)&*(blue)&*(blue)&*(blue)&*(blue)&*(blue)&*(blue)&*(blue)&*(blue)&*(blue)&*(blue)&*(blue)&*(blue)&*(blue)&*(blue)\\
\none $\bab$&\none&*(foge)&*(foge)&*(foge)&*(foge)&*(foge)&*(foge)&*(foge)&*(foge)&*(foge)&*(foge)&*(foge)&*(foge)&*(foge)\\
\none $\aba$&\none&*(blue)&*(blue)&*(blue)&*(blue)&*(blue)&*(blue)&*(blue)&*(blue)&*(blue)&*(blue)&*(blue)\\
\none $\ba$&\none&*(foge)&*(foge)&*(foge)&*(foge)&*(foge)&*(foge)&*(foge)\\
\none $\ab$&\none&*(blue)&*(blue)&*(blue)&*(blue)&*(blue)&*(blue)\\
\end{ytableau}
\end{center}
We denote by $\lambda''$ the final result, which exists since the sum of the indices of parts in $\mathcal{E}$ strictly decreases by one at each step.
\end{itemize}
Then set $\Phi(\lambda)=\lambda''$. In our example, we obtain
\begin{equation}
\Phi(24_{\ab} + 17_{\ba}+ 11_{\ba}+10_{\ab}+9_{\ba}+8_{\ba}+6_{\ab}+5_{\ab}+4_{\ba}+4_{\ab})=24_{\ab}+21_{\bab}+16_{\ab}+13_{\bab}+11_{\aba}+7_{\ba}+6_{\ab}
\end{equation}
and we easily check that it belongs to $\mathcal{D}$. 
\\\\We will prove in \Sct{prf} that, during \Stt, a problem of order can only occur when $(\la'_i,\la'_{i+1})\in \Od\times \E$, and it means that, at each substep, 
\[\la'_i\not\gg_c\la'_{i+1}\Longrightarrow (\la'_i,\la'_{i+1})\in \Od\times \E\,\cdot\] 
Then, at the end of the process, the resulting partition will be well-ordered by $\gg_c$, and we will also show that its parts stay positive, so that it belongs to $\D$.
\subsubsection{How to compute $\Psi:\mathcal{D}\rightarrow\mathcal{C}$}\label{psii}
Let us take $\nu = \nu_1+\cdots+\nu_s \in \mathcal{D}$ with $\nu_1\gg_c\cdots\gg_c\nu_s$. We also take the example  $\nu = \lambda''$ in the previous part,
\begin{equation}
\nu=24_a+21_{b^2}+16_a+13_{b^2}+11_{a^2}+7_b+6_a\,\cdot
\end{equation}
\begin{itemize}
\item[\Soo:]As long as there exists $i\in\lbrace 1,\dots,s-1\rbrace$ such that
$(\nu_i,\nu_{i+1})\in \mathcal{E}\times\mathcal{O}$
and
$\mu(\nu_i)\not>_c \nu_{i+1}\,,$
we turn $(\nu_i,\nu_{i+1})$ into $\Lambda^{-1}(\nu_i,\nu_{i+1})\in \mathcal{O}\times\mathcal{E}$
. We denote the final result by $\nu'$, which exists since the sum of the indices of the parts in $\mathcal{O}$ strictly decreases at each step.
One can easily check that if we proceed by taking the greatest $i$ at each step, we have the exact reverse steps as we did before.
\ytableausetup{centertableaux,boxsize=0.2cm}
\begin{center}
\begin{tiny}
 \begin{ytableau}
\none $\ab$&\none&*(blue)&*(blue)&*(blue)&*(blue)&*(blue)&*(blue)&*(blue)&*(blue)&*(blue)&*(blue)&*(blue)&*(blue)&*(blue)&*(blue)&*(blue)&*(blue)&*(blue)&*(blue)&*(blue)&*(blue)&*(blue)&*(blue)&*(blue)&*(blue)  \\
\none $\bab$&\none&*(foge)&*(foge)&*(foge)&*(foge)&*(foge)&*(foge)&*(foge)&*(foge)&*(foge)&*(foge)&*(foge)&*(foge)&*(foge)&*(foge)&*(foge)&*(foge)&*(foge)&*(foge)&*(foge)&*(foge)&*(foge)\\
\none $\ab$&\none&*(blue)&*(blue)&*(blue)&*(blue)&*(blue)&*(blue)&*(blue)&*(blue)&*(blue)&*(blue)&*(blue)&*(blue)&*(blue)&*(blue)&*(blue)&*(blue)\\
\none $\bab$&\none&*(foge)&*(foge)&*(foge)&*(foge)&*(foge)&*(foge)&*(foge)&*(foge)&*(foge)&*(foge)&*(foge)&*(foge)&*(foge)\\
\none $\aba\ast$&\none&*(blue)&*(blue)&*(blue)&*(blue)&*(blue)&*(blue)&*(blue)&*(blue)&*(blue)&*(blue)&*(blue)\\
\none $\ba\ast$&\none&*(foge)&*(foge)&*(foge)&*(foge)&*(foge)&*(foge)&*(foge)\\
\none $\ab$&\none&*(blue)&*(blue)&*(blue)&*(blue)&*(blue)&*(blue)\\
\end{ytableau}
\end{tiny}
$\quad\longrightarrow\quad$
\begin{tiny}
\begin{ytableau}
\none $\ab$&\none&*(blue)&*(blue)&*(blue)&*(blue)&*(blue)&*(blue)&*(blue)&*(blue)&*(blue)&*(blue)&*(blue)&*(blue)&*(blue)&*(blue)&*(blue)&*(blue)&*(blue)&*(blue)&*(blue)&*(blue)&*(blue)&*(blue)&*(blue)&*(blue)  \\
\none $\bab$&\none&*(foge)&*(foge)&*(foge)&*(foge)&*(foge)&*(foge)&*(foge)&*(foge)&*(foge)&*(foge)&*(foge)&*(foge)&*(foge)&*(foge)&*(foge)&*(foge)&*(foge)&*(foge)&*(foge)&*(foge)&*(foge)\\
\none $\ab$&\none&*(blue)&*(blue)&*(blue)&*(blue)&*(blue)&*(blue)&*(blue)&*(blue)&*(blue)&*(blue)&*(blue)&*(blue)&*(blue)&*(blue)&*(blue)&*(blue)\\
\none $\bab$&\none&*(foge)&*(foge)&*(foge)&*(foge)&*(foge)&*(foge)&*(foge)&*(foge)&*(foge)&*(foge)&*(foge)&*(foge)&*(foge)\\
\none $\ab$&\none&*(blue)&*(blue)&*(blue)&*(blue)&*(blue)&*(blue)&*(blue)&*(blue)&*(blue)\\
\none $\abab\ast$&\none&*(blue)&*(blue)&*(blue)&*(blue)&*(blue)&*(foge)&*(foge)&*(foge)&*(foge)\\
\none $\ab\ast$&\none&*(blue)&*(blue)&*(blue)&*(blue)&*(blue)&*(blue)\\
\end{ytableau}
\end{tiny}
\[\qquad\qquad\qquad\qquad\qquad\qquad\qquad\qquad\qquad\qquad\qquad\qquad\downarrow\]
\begin{tiny}
\begin{ytableau}
\none $\ab$&\none&*(blue)&*(blue)&*(blue)&*(blue)&*(blue)&*(blue)&*(blue)&*(blue)&*(blue)&*(blue)&*(blue)&*(blue)&*(blue)&*(blue)&*(blue)&*(blue)&*(blue)&*(blue)&*(blue)&*(blue)&*(blue)&*(blue)&*(blue)&*(blue)  \\
\none $\bab$&\none&*(foge)&*(foge)&*(foge)&*(foge)&*(foge)&*(foge)&*(foge)&*(foge)&*(foge)&*(foge)&*(foge)&*(foge)&*(foge)&*(foge)&*(foge)&*(foge)&*(foge)&*(foge)&*(foge)&*(foge)&*(foge)\\
\none $\ab$&\none&*(blue)&*(blue)&*(blue)&*(blue)&*(blue)&*(blue)&*(blue)&*(blue)&*(blue)&*(blue)&*(blue)&*(blue)&*(blue)&*(blue)&*(blue)&*(blue)\\
\none $\ba$&\none&*(foge)&*(foge)&*(foge)&*(foge)&*(foge)&*(foge)&*(foge)&*(foge)&*(foge)&*(foge)\\
\none $\baba\ast$&\none&*(foge)&*(foge)&*(foge)&*(foge)&*(foge)&*(foge)&*(blue)&*(blue)&*(blue)&*(blue)&*(blue)&*(blue)\\
\none $\ab\ast$&\none&*(blue)&*(blue)&*(blue)&*(blue)&*(blue)&*(blue)&*(blue)\\
\none $\baba$&\none&*(foge)&*(foge)&*(foge)&*(foge)&*(blue)&*(blue)&*(blue)&*(blue)\\
\end{ytableau}
\end{tiny}
$\quad\longleftarrow\quad$
\begin{tiny}
\begin{ytableau}
\none $\ab$&\none&*(blue)&*(blue)&*(blue)&*(blue)&*(blue)&*(blue)&*(blue)&*(blue)&*(blue)&*(blue)&*(blue)&*(blue)&*(blue)&*(blue)&*(blue)&*(blue)&*(blue)&*(blue)&*(blue)&*(blue)&*(blue)&*(blue)&*(blue)&*(blue)  \\
\none $\bab$&\none&*(foge)&*(foge)&*(foge)&*(foge)&*(foge)&*(foge)&*(foge)&*(foge)&*(foge)&*(foge)&*(foge)&*(foge)&*(foge)&*(foge)&*(foge)&*(foge)&*(foge)&*(foge)&*(foge)&*(foge)&*(foge)\\
\none $\ab$&\none&*(blue)&*(blue)&*(blue)&*(blue)&*(blue)&*(blue)&*(blue)&*(blue)&*(blue)&*(blue)&*(blue)&*(blue)&*(blue)&*(blue)&*(blue)&*(blue)\\
\none $\bab\ast$&\none&*(foge)&*(foge)&*(foge)&*(foge)&*(foge)&*(foge)&*(foge)&*(foge)&*(foge)&*(foge)&*(foge)&*(foge)&*(foge)\\
\none $\ab\ast$&\none&*(blue)&*(blue)&*(blue)&*(blue)&*(blue)&*(blue)&*(blue)&*(blue)&*(blue)\\
\none $\ab$&\none&*(blue)&*(blue)&*(blue)&*(blue)&*(blue)&*(blue)&*(blue)\\
\none $\baba$&\none&*(foge)&*(foge)&*(foge)&*(foge)&*(blue)&*(blue)&*(blue)&*(blue)\\
\end{ytableau}
\end{tiny}
\[\downarrow\qquad\qquad\qquad\qquad\qquad\qquad\qquad\qquad\qquad\qquad\qquad\qquad\]
\begin{tiny}
\begin{ytableau}
\none $\ab$&\none&*(blue)&*(blue)&*(blue)&*(blue)&*(blue)&*(blue)&*(blue)&*(blue)&*(blue)&*(blue)&*(blue)&*(blue)&*(blue)&*(blue)&*(blue)&*(blue)&*(blue)&*(blue)&*(blue)&*(blue)&*(blue)&*(blue)&*(blue)&*(blue)  \\
\none $\bab\ast$&\none&*(foge)&*(foge)&*(foge)&*(foge)&*(foge)&*(foge)&*(foge)&*(foge)&*(foge)&*(foge)&*(foge)&*(foge)&*(foge)&*(foge)&*(foge)&*(foge)&*(foge)&*(foge)&*(foge)&*(foge)&*(foge)\\
\none $\ab\ast$&\none&*(blue)&*(blue)&*(blue)&*(blue)&*(blue)&*(blue)&*(blue)&*(blue)&*(blue)&*(blue)&*(blue)&*(blue)&*(blue)&*(blue)&*(blue)&*(blue)\\
\none $\ba$&\none&*(foge)&*(foge)&*(foge)&*(foge)&*(foge)&*(foge)&*(foge)&*(foge)&*(foge)&*(foge)\\
\none $\ba$&\none&*(foge)&*(foge)&*(foge)&*(foge)&*(foge)&*(foge)&*(foge)&*(foge)\\
\none $\aba$&\none&*(blue)&*(blue)&*(blue)&*(blue)&*(blue)&*(blue)&*(blue)&*(blue)&*(blue)&*(blue)&*(blue)\\
\none $\baba$&\none&*(foge)&*(foge)&*(foge)&*(foge)&*(blue)&*(blue)&*(blue)&*(blue)\\
\end{ytableau}
\end{tiny}
$\quad\longrightarrow\quad$
\begin{tiny}
\begin{ytableau}
\none $\ab$&\none&*(blue)&*(blue)&*(blue)&*(blue)&*(blue)&*(blue)&*(blue)&*(blue)&*(blue)&*(blue)&*(blue)&*(blue)&*(blue)&*(blue)&*(blue)&*(blue)&*(blue)&*(blue)&*(blue)&*(blue)&*(blue)&*(blue)&*(blue)&*(blue)  \\
\none $\ba$&\none&*(foge)&*(foge)&*(foge)&*(foge)&*(foge)&*(foge)&*(foge)&*(foge)&*(foge)&*(foge)&*(foge)&*(foge)&*(foge)&*(foge)&*(foge)&*(foge)&*(foge) \\
\none $\baba\ast$&\none&*(foge)&*(foge)&*(foge)&*(foge)&*(foge)&*(foge)&*(foge)&*(foge)&*(foge)&*(foge)&*(blue)&*(blue)&*(blue)&*(blue)&*(blue)&*(blue)&*(blue)&*(blue)&*(blue)&*(blue)\\
\none $\ba\ast$&\none&*(foge)&*(foge)&*(foge)&*(foge)&*(foge)&*(foge)&*(foge)&*(foge)&*(foge)&*(foge)\\
\none $\ba$&\none&*(foge)&*(foge)&*(foge)&*(foge)&*(foge)&*(foge)&*(foge)&*(foge)\\
\none $\aba$&\none&*(blue)&*(blue)&*(blue)&*(blue)&*(blue)&*(blue)&*(blue)&*(blue)&*(blue)&*(blue)&*(blue)\\
\none $\baba$&\none&*(foge)&*(foge)&*(foge)&*(foge)&*(blue)&*(blue)&*(blue)&*(blue)\\
\end{ytableau}
\end{tiny}
\[\qquad\qquad\qquad\qquad\swarrow\]
\ytableausetup{centertableaux,boxsize=0.31cm}
\begin{ytableau}
\none $\ab$&\none&*(blue)&*(blue)&*(blue)&*(blue)&*(blue)&*(blue)&*(blue)&*(blue)&*(blue)&*(blue)&*(blue)&*(blue)&*(blue)&*(blue)&*(blue)&*(blue)&*(blue)&*(blue)&*(blue)&*(blue)&*(blue)&*(blue)&*(blue)&*(blue)  \\
\none $\ba$&\none&*(foge)&*(foge)&*(foge)&*(foge)&*(foge)&*(foge)&*(foge)&*(foge)&*(foge)&*(foge)&*(foge)&*(foge)&*(foge)&*(foge)&*(foge)&*(foge)&*(foge) \\
\none $\ba$&\none&*(foge)&*(foge)&*(foge)&*(foge)&*(foge)&*(foge)&*(foge)&*(foge)&*(foge)&*(foge)&*(foge)\\
\none $\abab$&\none&*(blue)&*(blue)&*(blue)&*(blue)&*(blue)&*(blue)&*(blue)&*(blue)&*(blue)&*(blue)&*(foge)&*(foge)&*(foge)&*(foge)&*(foge)&*(foge)&*(foge)&*(foge)&*(foge)\\
\none $\ba$&\none&*(foge)&*(foge)&*(foge)&*(foge)&*(foge)&*(foge)&*(foge)&*(foge)\\
\none $\aba$&\none&*(blue)&*(blue)&*(blue)&*(blue)&*(blue)&*(blue)&*(blue)&*(blue)&*(blue)&*(blue)&*(blue)\\
\none $\baba$&\none&*(foge)&*(foge)&*(foge)&*(foge)&*(blue)&*(blue)&*(blue)&*(blue)\\
\end{ytableau}
\end{center}
And then 
\begin{equation}
\nu' = 24_{\ab}+17_{\ba}+11_{\ba}+19_{\abab}+8_{\ba}+11_{\aba}+8_{\baba} =\lambda'\,\cdot
\end{equation}
\item[\Stt:]We finish by dividing all parts in $\mathcal{E}$ into their upper and lower halves and keeping the order. We finally obtain  $\nu''$ and
we set $\Psi(\nu)=\nu''$.
\end{itemize}
In our example, we obtain 
\begin{equation}
\nu''= 24_{\ab} + 17_{\ba}+ 11_{\ba}+10_{\ab}+9_{\ba}+8_{\ba}+6_{\ab}+5_{\ab}+4_{\ba}+4_{\ab} =\lambda\,\cdot
\end{equation}
and then
\begin{equation}
\Psi(24_{\ab}+21_{\bab}+16_{\ab}+13_{\bab}+11_{\aba}+7_{\ba}+6_{\ab})= 24_{\ab} + 17_{\ba}+ 11_{\ba}+10_{\ab}+9_{\ba}+8_{\ba}+6_{\ab}+5_{\ab}+4_{\ba}+4_{\ab}\,\cdot
\end{equation}
We will discuss the uniqueness of the final result, its belonging to $\mathcal{C}$, and the fact that $\Psi=\Phi^{-1}$ in \Sct{prf}.

\section{Proof of the well-definedness of bijections $\Phi$ and $\Psi$}\label{prf}
In the next two subsections, we will show that $\Phi$ and $\Psi$ are well-defined.
\subsection{Well-definedness of $\Phi$}\label{prfphi}
\begin{prop}\label{prphi}
 For any $\la\in \C$, the final result after \St is unique and belongs to $\D$. Moreover, the result is independent of the order in which we proceed in \St (choices of unordered parts)
\end{prop}
Let us take any $\la = \la_1+\cdots+\la_s \in \C$, and set $\la_i = (l_i,c_i)\in \Od'$ for all $i\in\sss$. We also define a function $\Delta$ on $\sss^2$ as follows,
\begin{equation}\label{del}
 \Delta : \,\,(i,j) \mapsto \left\lbrace \begin{array}{l c l}
                                     0& \text{if}& i=j\\
                                     \displaystyle\sum_{k=i}^{j-1} \chi(c_k\leq c_{k+1})& \text{if}& i<j\\
                                     \displaystyle-\sum_{k=j}^{i-1} \chi(c_k\leq c_{k+1})& \text{if}& i>j\\
                                    \end{array}
                       \right. \,,
\end{equation}
so that $\Delta$ satisfies \Crr : $\Delta(i,k)+\Delta(k,j) = \Delta(i,j)$. We can also remark that, for any $i\leq j$,
\begin{equation}\label{signe}
 0\leq \chi(c_i\leq c_j)\leq \Delta(i,j)\leq j-i\,\,, \,\Delta(j,i)=-\Delta(i,j)\,\cdot
\end{equation}
Since $\la$ is well-ordered by $>_c$, we then have for all $i\in \ssss$, 
\begin{equation}\label{rang}
\la_i>_c\la_{i+1} \Longleftrightarrow l_i-l_{i+1}\geq \Delta(i,i+1)\,\cdot
\end{equation}
At \Soo, the choice of greatest troublesome pairs is formalized as follows : 
\begin{itemize}
 \item $i_1$ is the smallest $i\in \ssss$ such that $l_i-l_{i+1} = \Delta(i,i+1)$,
 \item if $i_{k-1}$ is chosen, then, until it is possible, $i_k$ is the smallest $i\in \{i_{k-1}+2,\ldots,s-1\}$ such that $l_i-l_{i+1} = \Delta(i,i+1)$,
\end{itemize}
Thus, $I= \{i_k\}$ can be viewed as the set of indices of upper halves, $I+1$ the set of indices for lower halves and $J = \sss\setminus(I\sqcup (I+1))$ the set of indices of parts that stay in $\Od'$.  
In fact, $I,J$ are the unique sets satisfying the following conditions :
\begin{enumerate}
\item $I,I+1$ and $J$ form a set-partition of $\sss$,
\item for all $i\in I$, $\la_i$ and $\la_{i+1}$ are consecutive for $>_c$, which means that $l_i-l_{i+1}=\Delta(i,i+1)$,
\item for all $j\in J\cap \ssss$, $\la_j\gg_c \la_{j+1}$, or equivalently,  
\begin{equation}\label{+}
l_j-l_{j+1}\geq 1+\Delta(j,j+1)\,\cdot
\end{equation}
\end{enumerate}
If we define $\alpha$ on $\sss^2$ by 
\begin{equation}
 \alpha : (i,j) \mapsto \left\lbrace \begin{array}{l c l}
                                     | [i,j)\cap J|& \text{if}& i\leq j\\
                                     -\alpha(j,i)& \text{otherwise}&\\
                                    \end{array}
                       \right. \,,
\end{equation}
then $\alpha$ satisfies \Crr, and, by \eqref{rang} and \eqref{+}, we have for all $i\leq j \in \sss$ 
\begin{equation}\label{ecart}
 l_i-l_j \geq \alpha(i,j)+\Delta(i,j)\,\cdot
\end{equation}
Then, at the end of \Soo, parts in $\E'$  are $\la_{i}+\la_{i+1}$ for $i\in I$, and parts in $\Od'$ are $\la_j$ for $j\in J$. 
\\With example \eqref{exp}, $\lambda = 24_{\ab} + 17_{\ba}+ 11_{\ba}+10_{\ab}+9_{\ba}+8_{\ba}+6_{\ab}+5_{\ab}+4_{\ba}+4_{\ab}$, we have $s=10$ and 
\[
\begin{array}{|c|c|c|c|c|c|c|c|c|c|c|}
 \hline
 i&1&2&3&4&5&6&7&8&9&10\\
 \hline
 c_i&\ab&\ba&\ba&\ab&\ba&\ba&\ab&\ab&\ba&\ab\\
 \hline
 l_i&24&17&11&10&9&8&6&5&4&4\\
 \hline
 set&J&J&J&I&I+1&J&I&I+1&I&I+1\\
 \hline
 \Delta(1,i)&0&1&2&2&3&4&4&5&6&6\\
 \hline
 \alpha(1,i)&0&1&2&3&3&3&4&4&4&4\\
 \hline
\end{array}
\]
\\\\The key question is then how do positions of parts in $\E'$ and $\Od'$ evolve during \Stt. We can define a position bijection $P$ on $\sss$, which indicates new indices of original parts after some applications of $\Lambda$ (parts in $\E'$ have two indices), and  we have:
\begin{itemize}
 \item $P(i+1)=P(i)+1$ for all $i\in I$, since the upper and lower halves move together,
 \item $P$ is increasing on $I\sqcup(I+1)$ and on $J$ since parts of the same kind never cross,
 \item $P(i)\leq i$ for all $i\in I$ and $P(j)\geq j$ for all $j\in J$.
 \end{itemize}
 \begin{rem}\label{rem1}
Since $P$ is a permutation of $\sss$ and is increasing on $I\sqcup (I+1)$ and $J$, then 
$P(I)$ determines $P(I+1)$ and $P(J)$.
\end{rem}
\begin{prop}\label{pr11}
 Let $\phi$ be the function on $J\times I$ defined by 
 \begin{equation}
  \phi : (j,i) \,\mapsto \, l_j-2l_{i+1}-\Delta(j,i+1)-\Delta(i+1-\alpha(j,i),i+1)\,\cdot
 \end{equation}
Then for any $i\in I$, the final position (after \Stt) of the original part $\la_i+\la_{i+1}$ is 
\begin{equation}
 P(i) = i - |\{j\in J/ j<i\,, \phi(j,i)<0\}|\,\cdot
\end{equation}
\end{prop}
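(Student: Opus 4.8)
The idea is to run \Stt\ with a convenient order of crossings and to follow the parts slot by slot. Since $\Lambda$ preserves the sequence of primary colours and, suitably read, the sequence of part‑sizes, it is natural to fix $s$ slots carrying the fixed colours $c_1,\dots,c_s$: after \Soo\ the slot $t$ holds a part of size $l_t$ and colour $c_t$, and the slots $\{i,i+1\}$ with $i\in I$ are tied into a secondary part. Put $\widehat l_t=l_t+\Delta(1,t)$ and, in any configuration, $\widehat m_t=(\text{current size at slot }t)+\Delta(1,t)$. A direct check from \eqref{mov} and \eqref{oe} shows: each crossing preserves the colour attached to each slot; a tied pair $\{t,t+1\}$ always satisfies $\widehat m_t=\widehat m_{t+1}$; a crossing of a primary part at slot $t$ with a secondary part occupying $\{t+1,t+2\}$ interchanges them — the secondary part moving to $\{t,t+1\}$ and the primary part to $t+2$ — with each part keeping its $\widehat m$‑value, and this crossing is performed exactly when $\widehat m_t<2\widehat m_{t+1}-\Delta(1,t+1)$. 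Hence throughout \Stt\ the part coming from $\la_j$ ($j\in J$) carries the constant value $\widehat l_j$, and the secondary part $\la_i+\la_{i+1}$ ($i\in I$) carries the constant value $\widehat l_i=\widehat l_{i+1}$ (equal since $l_i-l_{i+1}=\Delta(i,i+1)$ for $i\in I$).

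By \Prp{prphi} the final configuration is independent of the order of crossings, so we may treat the secondary parts one at a time, in increasing order of their index $i\in I$, pushing each as far to the front as it will go before passing to the next. We use two monotonicity properties of $\phi$, both following from \Crr\ together with \eqref{rang} and \eqref{+}: for fixed $i\in I$, $\phi(j,i)$ is weakly decreasing in $j$ over $j\in J\cap[1,i)$ — the step to the next smaller element of $J$ uses \eqref{+} — so that $\{j\in J:j<i,\ \phi(j,i)<0\}$ is a final segment of $J\cap[1,i)$; and for fixed $j$, $\phi(j,i)$ is weakly increasing in $i$ over $i\in I\cap(j,s]$ (because $\widehat l_{i+1}$ is weakly decreasing and $i+1-\alpha(j,i)$ is weakly increasing in $i$), so that $\phi(j,i)<0$ with $j\in J$, $i',i\in I$, $j<i'<i$, forces $\phi(j,i')<0$.

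We now induct on the number $r$ of secondary parts processed. The inductive hypothesis is that after treating those from $i_1<\dots<i_r$: the secondary part from $i_q$ occupies $\{P(i_q),P(i_q)+1\}$ with $P(i_q)=i_q-|\{j\in J:j<i_q,\ \phi(j,i_q)<0\}|$; the parts coming from the $\la_j$ it has passed occupy the slots $P(i_q)+2,\dots,i_q+1$, the one from the larger index lying further to the right; and the secondary parts from $i_{r+1},\dots$ have not moved. For the step, push the secondary part from $i:=i_{r+1}$ forward. Combining the inductive hypothesis with the first monotonicity property (so it does not stop too soon) and the second (so that every $\la_j$ with $j<i$ and $\phi(j,i)<0$ — which, if $j<i_r$, has $\phi(j,i_r)<0$ — was indeed already brought adjacent to the block ahead by the secondary part from $i_r$, hence is reachable), one checks that it meets the parts coming from the $\la_j$ ($j\in J$, $j<i$) in strictly decreasing order of $j$, and that when about to meet the one from $\la_j$ it occupies the slots $\{i+1-\alpha(j,i),\,i+2-\alpha(j,i)\}$ with that part at slot $i-\alpha(j,i)$ (it has by then passed exactly the $\alpha(j,i)-1$ parts coming from the $\la_{j''}$ with $j<j''<i$, $j''\in J$). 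Substituting $\widehat m_{i-\alpha(j,i)}=\widehat l_j=l_j+\Delta(1,j)$ and $2\widehat m_{i+1-\alpha(j,i)}=2\widehat l_i=2l_{i+1}+2\Delta(1,i+1)$ into the criterion $\widehat m_t<2\widehat m_{t+1}-\Delta(1,t+1)$ and re‑expanding $\Delta(j,i+1)$ and $\Delta(i+1-\alpha(j,i),i+1)$ by \Crr, this crossing takes place if and only if $\phi(j,i)<0$. By the first monotonicity property the secondary part from $i$ thus passes exactly the parts coming from the $\la_j$ with $j\in J$, $j<i$, $\phi(j,i)<0$, each crossing moving it one slot forward; so it ends at slot $i-|\{j\in J:j<i,\ \phi(j,i)<0\}|=P(i)$, and the parts it displaced land where the inductive hypothesis for $r+1$ requires.

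The one delicate point is the inductive step: tracking where the parts descended from the original primary parts sit after several secondary parts have swept over them (a single $\la_j$ may be pushed by more than one secondary part, which is exactly what the nested term $\Delta(i+1-\alpha(j,i),i+1)$ in $\phi$ records), and verifying that the secondary part currently being processed is never blocked by an earlier one before it has passed all the parts it should. Both are controlled by the two monotonicity properties of $\phi$ above.
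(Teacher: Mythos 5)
Your computational core is sound and in fact coincides with the paper's: your invariant $\widehat m_t$ (current size at slot $t$ plus $\Delta(1,t)$ is preserved by every crossing) is exactly \Lem{lem1}, your two monotonicity properties of $\phi$ are exactly \Lem{lem2}, and your verification that the crossing criterion, evaluated when the secondary part from $i$ occupies $\{i+1-\alpha(j,i),i+2-\alpha(j,i)\}$ facing the descendant of $\la_j$ at slot $i-\alpha(j,i)$, reduces to $\phi(j,i)<0$ is the same computation as the paper's evaluation of the difference $D$ at an adjacency $P(j')+1=P(i')$, using $P(j')=i'-\alpha(j',i')$.

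There is, however, a genuine logical gap: you invoke \Prp{prphi} (independence of the result from the order of crossings) to justify restricting to one particular schedule. In the paper, \Prp{prphi} is a \emph{consequence} of \Prp{pr11} together with \Prp{pr12}; it is not available while proving \Prp{pr11}, and \Stt\ is genuinely nondeterministic, so establishing the formula for one schedule does not by itself establish it for the terminal position of an arbitrary run. The paper avoids this by reasoning about an arbitrary terminal position $P$ directly: if some pair $j<i$ with $\phi(j,i)<0$ remains uncrossed, monotonicity of $\phi$ produces an adjacent such pair whose size difference is computed (path-independently) to equal $\phi(j',i')<0$, so by \eqref{oe} a crossing still applies; conversely any adjacency with $\phi\geq 0$ is already well ordered. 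This characterizes every terminal position at once and yields order-independence as a corollary rather than needing it as an input. Your argument can be repaired without changing its structure: two simultaneously applicable crossings act on disjoint triples of parts (a part cannot lie in both $\Od$ and $\E$), hence commute, so the terminating rewriting system is locally confluent and Newman's lemma gives the uniqueness you appeal to. But as written the citation of \Prp{prphi} is circular. A lesser point: the inductive step (that the secondary part from $i$ is never blocked by an earlier secondary part before passing every $\la_j$ with $\phi(j,i)<0$, and that the displaced primary parts land where claimed) is dispatched with ``one checks''; the slot arithmetic does work out, but this is exactly the bookkeeping the paper's terminal-position argument is designed to bypass, and it deserves to be written out if you keep the schedule-by-schedule route.
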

\begin{prop}\label{pr12}
 The final result after \St is in $\D$.
\end{prop}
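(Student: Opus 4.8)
I would derive \Prp{pr12} from \Prp{pr11}. That proposition pins down the final slot $P(i)$ of each secondary part born from a troublesome pair $\la_i+\la_{i+1}$, $i\in I$, and, together with the Remark preceding it, this fixes the final position of \emph{every} part; so it suffices to describe the size and colours of the part in each slot of $\la''$ and then check the two conditions for membership in $\D$: consecutive parts $\gg_c$-ordered, and every part positive (secondary parts of length $\geq 2$). The colours come for free: both the merging in \Soo and every application of $\Lambda$ preserve the left-to-right sequence of primary colours carried by the $s$ colour-slots, so slot $t$ always carries $c_t$; hence the final secondary part from $i$ is $(\,\cdot\,,c_{P(i)},c_{P(i)+1})$ and the final primary part descended from $\la_j$, $j\in J$, has colour $c_{P(j)}$. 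For the sizes, each crossing under $\Lambda$ makes the secondary part grow and the primary part shrink by an amount in $\{0,1,2\}$; the secondary part from $i$ undergoes exactly the crossings with the $j\in J$, $j<i$, for which $\phi(j,i)<0$ (that is, $i-P(i)$ of them), and — order-independence being part of \Prp{prphi} — one may accumulate these changes in any convenient order and use \Cr for $\Delta$ and $\alpha$ to reach closed expressions for all final lengths in terms of the $l_k$, $\Delta$, $\alpha$ and $\phi$.

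Granting these formulas, the ordering check is a finite case analysis on the provenance of two consecutive parts of $\la''$: both from $I$, left from $I$ and right from $J$, left from $J$ and right from $I$, or both from $J$. In each case the relevant instance of $\gg_c$ — read off from \eqref{oo}, \eqref{oe}, \eqref{eo}, and, in the $(I,I)$ case, from the reformulation \eqref{gm} comparing $\mu$ of the left part with $\gamma$ of the right — collapses after substitution to an inequality already available: $\phi(j,i)\geq 0$ precisely when the primary part from $j$ does not cross the secondary part from $i$, $\phi(j,i)<0$ when it does, the spacing estimate \eqref{ecart}, and the defining inequality \eqref{+} for indices of $J$. For positivity, secondary parts only grow, hence keep length $\geq l_i+l_{i+1}\geq 2$; the primary part descended from $\la_j$ has a total subtracted amount that telescopes (again by \Cr) to a quantity one checks is at most $l_j-1$, using $\la_j\in\Od'$ together with the $\phi<0$ inequalities that select the crossings actually made.

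This also re-establishes the invariant announced in \Sct{bij} — at every substep of \Stt the only consecutive pair that can violate $\gg_c$ lies in $\Od\times\E$. It holds right after \Soo: running the four-case check with $P=\mathrm{id}$, the only pairs that can fail are $(\la_j,\la_{j+1}+\la_{j+2})$ with $j\in J$ and $j+1\in I$, every other consecutive pair being forced into order by the explicit difference conditions of \Sct{cdtdif}, \eqref{+}, the positivity $l_m\geq 1$, and the selection rule for $I$. And it is preserved by $\Lambda$: the crossed pair becomes ordered by \Prp{pr1}, and the two neighbouring pairs remain of admissible shape, which one verifies with the same bookkeeping. Since \Stt stops exactly when no $\Od\times\E$ violation remains, the terminal $\la''$ is fully $\gg_c$-ordered. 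The crux, I expect, is the ordering case analysis of the second paragraph: keeping the length formulas, the colour slots and the four provenance cases in play at once, and in particular handling the tight $(I,I)$ case through \eqref{gm} — as the step $13_{b^2}\gg_c 11_{a^2}$ in the worked example shows, there the inequality holds with equality.
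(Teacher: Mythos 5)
Your plan is essentially the paper's proof: the closed length formulas you describe are exactly \Lem{lem1} (the part originally at $k$ ends up as $(\Delta(P(k),k)+l_k,\,c_{P(k)})$), after which the paper runs the same provenance case analysis — positivity from $P(i)\leq i$, $P(j)\geq j$ and \eqref{ecart}; same-kind pairs from \eqref{ecart} and \eqref{signe} via \eqref{oo} and \eqref{gm}; and mixed pairs from \Prp{pr1} together with \Prp{pr11} when $j<i$, plus a direct $\Delta$-computation with \eqref{eo} when $i+1<j$. The only caveat is that your closing invariant argument (no intermediate violation outside $\Od\times\E$) is both redundant given your second paragraph and the one claim you have not reduced to the stated inequalities; the paper avoids it entirely by verifying the terminal configuration directly.
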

In our example, we have the following table for $\phi$:
\[
\begin{array}{|c|c|c|c|}
 \hline
 _{j\in J}\setminus^{i\in I}&4&7&9\\
 \hline
 1&1&6&8\\
 \hline
 2&-4&1&2\\
 \hline
 3&-9&-3&-2\\
 \hline
 6&-9&-4&-2\\
 \hline
\end{array}
\]
and then the final position 
\[\begin{array}{|c|c|c|c|}
   \hline
   i&4&7&9\\
   \hline
   P(i)&2&5&7\\
   \hline
  \end{array}\quad,\quad
  \begin{array}{|c|c|c|c|}
   \hline
   i+1&5&8&10\\
   \hline
   P(i+1)&3&6&8\\
   \hline
  \end{array}\quad,\quad 
  \begin{array}{|c|c|c|c|c|}
   \hline
   j&1&2&3&6\\
   \hline
   P(j)&1&4&9&10\\
   \hline
  \end{array}\quad,
\]
and it matches with the final result 
$\Phi(\la) = 24_{\ab}+21_{\bab}+16_{\ab}+13_{\bab}+11_{\aba}+7_{\ba}+6_{\ab}$.
\\Before proving \Prp{pr11} and \Prp{pr12},
we state and prove two crucial lemmas.
\begin{lem}\label{lem1}
 If the original part $\la_k=(l_k,c_k)$ at position $k$ moves to position $P(k)$, then it becomes $\la'_{P(k)}=(\Delta(P(k),k)+l_k,c_{P(k)})$.
\end{lem}
\begin{proof}
We prove this recursively by using \Cr and observing what happens under the transformation $\Lambda$, at position $k$ (with $\la'_k,\la'_{k+1}+\la'_{k+2}\in \Od\times \E$),  and using \eqref{mov}:
\[\begin{array}{|c | c |c |c|}
\hline
   \text{positions}& k&k+1&k+2 \\
   \hline
   \text{colors}& c_k&c_{k+1}&c_{k+2} \\
   \hline
   \text{part sizes before }\Lambda&l'_k&l'_{k+1}&l'_{k+2}\\
   \hline
   \text{part sizes after }\Lambda&\Delta(k,k+1)+l'_{k+1}&\Delta(k+1,k+2)+l'_{k+2}&\Delta(k+2,k)+l'_k\\
   \hline
  \end{array}\,\cdot
\]
\end{proof}
\begin{lem}\label{lem2} 
The function $\phi$ is decreasing according to $J$ and increasing according to $I$.
 \end{lem}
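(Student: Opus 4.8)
The plan is to establish both monotonicities by reducing to consecutive elements of $J$ (resp.\ of $I$) and telescoping. The whole computation rests on \Cr for $\Delta$ and $\alpha$, the fundamental inequality \eqref{ecart}, and the trivial estimate $0\le\Delta(x,y)\le y-x$ for $x\le y$ coming from \eqref{signe}. Throughout I work in the range $j<i$, which is the only one entering \Prp{pr11}, and I write $\beta$ for $\alpha(j,i)$, etc.

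\emph{Decreasing in $J$.} Fix $i\in I$ and let $j<j'$ be consecutive elements of $J$ with $j'<i$. Then $[j,j')\cap J=\{j\}$, so $\alpha(j,j')=1$ and hence $\beta:=\alpha(j,i)=\alpha(j',i)+1=:\beta'+1$ by \Cr. A direct computation, using \Cr to telescope $\Delta(j,i+1)-\Delta(j',i+1)=\Delta(j,j')$ and, once more, to collapse the two shifted terms (whose left endpoints differ by exactly $1$), gives
\[
\phi(j,i)-\phi(j',i)=(l_j-l_{j'})-\Delta(j,j')-\Delta(i-\beta',i+1-\beta'),
\]
where $\Delta(i-\beta',i+1-\beta')\in\{0,1\}$ is a single summand. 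On the other hand \eqref{ecart} applied to $j<j'$ gives $l_j-l_{j'}\ge\alpha(j,j')+\Delta(j,j')=1+\Delta(j,j')$. Hence $\phi(j,i)-\phi(j',i)\ge1-1=0$, and iterating over consecutive pairs shows $\phi(\cdot,i)$ is non-increasing on $J$; in particular $\{\,j\in J:\ j<i,\ \phi(j,i)<0\,\}$ is a final segment of $J\cap\{1,\dots,i-1\}$, as needed for \Prp{pr11}.

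\emph{Increasing in $I$.} Fix $j\in J$ and let $i<i'$ be consecutive in $I$ with $j<i$; set $\beta=\alpha(j,i)\ge1$ and $\beta'=\alpha(j,i')$. Since $i\in I$, $i+1\in I+1$, and $i+2,\dots,i'-1\in J$, we get $\alpha(i,i')=i'-i-2$, so by \Cr $\alpha(i+1,i'+1)=\alpha(i,i')=\beta'-\beta\ge0$ and, crucially, the left endpoints of the two shifted $\Delta$'s satisfy $(i'+1-\beta')-(i+1-\beta)=(i'-i)-(\beta'-\beta)=2$. Put $a=i+1-\beta$, so $i'+1-\beta'=a+2$ and $i'+1=a+\beta'+2$. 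Telescoping $\Delta(j,i'+1)-\Delta(j,i+1)=\Delta(i+1,i'+1)$ and bounding $l_{i+1}-l_{i'+1}\ge\alpha(i+1,i'+1)+\Delta(i+1,i'+1)$ via \eqref{ecart} leads to
\[
\phi(j,i')-\phi(j,i)\ge 2(\beta'-\beta)+\Delta(a+\beta,a+\beta'+2)-\Delta(a+2,a+\beta'+2)+\Delta(a,a+\beta).
\]
For $\beta\ge2$, splitting $\Delta(a+2,a+\beta'+2)=\Delta(a+2,a+\beta)+\Delta(a+\beta,a+\beta'+2)$ and $\Delta(a,a+\beta)=\Delta(a,a+2)+\Delta(a+2,a+\beta)$ by \Cr collapses the last three terms to $\Delta(a,a+2)\ge0$, so the right-hand side is $\ge2(\beta'-\beta)+\Delta(a,a+2)\ge0$; the case $\beta=1$ is analogous and simpler. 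Composing over consecutive pairs of $I$ gives that $\phi(j,\cdot)$ is non-decreasing on $I$.

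\emph{Where the difficulty lies.} The only delicate point is the ``shifted'' term $\Delta(i+1-\alpha(j,i),i+1)$ in the definition of $\phi$: estimating it crudely by its length $\alpha(j,i)$ is far too lossy and does \emph{not} yield $\phi(j,i')\ge\phi(j,i)$. The trick is the exact endpoint identities above --- the endpoint $i+1-\alpha(j,i)$ moves by $1$ when $j$ advances one step in $J$, and by $2$ when $i$ advances one step in $I$ --- which let one split this term by \Cr against the telescoped $\Delta(i+1,i'+1)$ produced by the $l$-differences, so that the two lossy contributions cancel and only manifestly non-negative short-interval $\Delta$'s survive. Once \Lem{lem2} is available, \Prp{pr11} follows from the final-segment observation, and combining it with \Lem{lem1}, \Prp{pr1} and \Prp{pr2} gives \Prp{pr12}, i.e.\ that the output of \St always lies in $\D$.
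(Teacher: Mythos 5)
Your proof is correct and follows essentially the same route as the paper: both monotonicities come from combining Chasles' relation for $\Delta$ and $\alpha$ with \eqref{ecart} and the bounds \eqref{signe}, the key point in each case being that the shifted endpoint $i+1-\alpha(j,i)$ moves in lockstep so that the lossy $\Delta$-terms cancel. The only cosmetic difference is that you reduce to consecutive elements of $J$ (resp.\ $I$) and telescope, whereas the paper regroups the general difference directly (e.g.\ as $2(l_{i+1}-l_{i'+1}-\Delta(i+1,i'+1))+\Delta(i+1-\alpha(j,i),i'+1-\alpha(j,i'))$), which also makes your case split on $\beta$ unnecessary.
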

\begin{proof}
$\quad$
\begin{itemize}
 \item For any $j<j'\in J$ and $i\in I$, we have by \Cr
 \begin{align*}
 \phi(j,i)-\phi(j',i) &= l_j-l_{j'} - \Delta(j,j')-\Delta(i+1-\alpha(j,i),i+1-\alpha(j',i))\,\,\quad\\
 &\geq \alpha(j,j')-\Delta(i+1-\alpha(j,i),i+1-\alpha(j',i)) \quad \text{(by \eqref{ecart}).}
 \end{align*}
 But \Cr gives
 \[i+1-\alpha(j',i)-(i+1-\alpha(j,i)) = \alpha(j,j')\geq 0\,,\]
 so that, by \eqref{signe}, we obtain $\phi(j,i)-\phi(j',i)\geq 0$ .
 \item For any $j\in J$ and $i<i'\in I$, we have by \Cr
 \begin{align*}
 \phi(j,i')-\phi(j,i) &=2(l_{i+1}-l_{i'+1})-\Delta(i+1,i'+1)+\Delta(i+1-\alpha(j,i),i+1)\\
 &\quad +\Delta(i'+1,i'+1-\alpha(j,i'))\\
 &=2(l_{i+1}-l_{i'+1}-\Delta(i+1,i'+1))+\Delta(i+1-\alpha(j,i),i'+1-\alpha(j,i'))\\
 &\geq 2\alpha(i+1,i'+1)+ \Delta(i+1-\alpha(j,i),i'+1-\alpha(j,i'))\,\cdot
 \end{align*}
 Since \Cr gives
 \[i'+1-\alpha(j,i')-(i+1-\alpha(j,i)) = i'-i-\alpha(i,i') = |[i,i')\cap (I\sqcup I+1)|\geq 0\,,\]
 by \eqref{signe}, we then have $\phi(j,i')-\phi(j,i)\geq 0$. 
\end{itemize}
\end{proof}
\begin{proof}[Proof of \Prp{pr11}]
Now, let $P$ be the final position. 
\begin{itemize}
 \item Let us suppose that there exist $j,i\in J\times I$ such that $j<i, P(j)<P(i)$ and $\phi(j,i)<0$. By \Lem{lem2} we have that $\phi(j',i')<0$ for all $j\leq j'\in J,\,\,i\geq i' \in I$.
 Also since $P$ is a bijection on $\sss$ and increasing on $J$ and $I$, and $P(J)+1\setminus P(J) \subset P(I)$, we necessarily have some $j\leq j'\in J,\,\,i\geq i' \in I$ such that $P(j')+1 = P(i')$.
 We can also observe that $j'<i'$, since $P(j')\geq j'$ and $P(i')\leq i'$.
 But we obtain by \Lem{lem1} the following difference in part sizes:
 \begin{align*}
 D &= \la'_{P(j')}-(\la'_{P(j')+1}+\la'_{P(j')+2}) - \Delta(P(j'),P(j')+2)\\
 &=l_{j'}+\Delta(P(j'),j') - [ 2 (l_{i'+1}+\Delta(P(j')+2,i'+1))+\Delta(P(j')+1,P(j')+2)]\\
 &\quad-\Delta(P(j'),P(j')+2)\\
 &= l_{j'}-2l_{i'+1} -(\Delta(j',P(j'))+\Delta(P(j'),P(j')+2)+\Delta(P(j')+2,i'+1))\\
 &\quad -(\Delta(P(j')+1,P(j')+2)+\Delta(P(j')+2,i'+1))\\
 &= l_{j'}-2l_{i'+1} - \Delta(j',i'+1) - \Delta(P(j')+1,i'+1)\,\cdot
 \end{align*}
Now, what exactly is $P(j')$? Since $P$ is increasing on $J$ and $I\sqcup (I+1)$, and $P(j')+1 = P(i')$, we exactly have
\[P(j') = |[1,j']\cap J|+|[1,i')\cap (I\sqcup I+1)| = 1+\alpha(1,j')+i'-1 -\alpha(1,i') = i'-\alpha(j',i')\,\cdot\]
Finally, we obtain that
\[D = l_{j'}-2l_{i'+1} - \Delta(j',i'+1) - \Delta(i'+1-\alpha(j',i'),i'+1) = \phi(j',i')<0\,\cdot\]
The difference $D$ is negative, and by \eqref{oe}, this implies that $\la'_{P(j')}\not \gg_c \la'_{P(j')+1}+\la'_{P(j')+2}$, so that $P$ is no longer the final position.
\\The final position is then such that $P(i)<P(j)$ for all $(j,i)\in J\times I$ with $j<i$ and $\phi(j,i)<0$.
\item As soon as we cross all pairs $(j,i)\in J\times I$ with $j<i$ and $\phi(j,i)<0$, we can no longer cross. In fact, if we have $P(j)+1 = P(i)$, then necessarily $j<i$ and $\phi(j,i)\geq 0$,
and the previous paragraph told us that the sizes' difference (minus $\Delta(P(j),P(j)+2)$) is exactly $\phi(j,i)$,  so that by \eqref{oe}, $\la'_{P(j)}\gg_c \la'_{P(j)+1}+\la'_{P(j)+2}$.
\end{itemize}
In conclusion, the final position for each $i\in I$ is such that 
\[i-P(i) = |\{j\in J/ j<i\,,\phi(j,i)<0\}|\,\cdot\]
\end{proof}
\begin{proof}[Proof of \Prp{pr12}]
Recall that, by \Lem{lem1}, the (primary) part originally at position $k$  becomes 
$\la'_{P(k)}=(\Delta(P(k),k)+l_k,c_{P(k)})$.
\begin{enumerate}
 \item Parts remain in $\Od'$ and $\E'$.
\begin{enumerate}
 \item For any $i\in I$, we have that $P(i)+1\leq i+1$, so that by \eqref{signe}, \[l_{i+1}+\Delta(P(i)+1,i+1)\geq l_{i+1}>0\,\cdot\]
 \item For any $j\in J$, we have that $P(j)\geq j$, so that by \eqref{signe} and \eqref{ecart} \[l_{j}+\Delta(P(j),j)= l_j - \Delta(j,P(j))\geq \alpha(j,P(j))+ l_{P(j)}>0\,\cdot\]
\end{enumerate}
\item Parts of the same kind are well ordered by $\gg_c$. 
\begin{enumerate}
 \item First, we consider two parts in $\Od'$. For any $j<j'\in J$, we have that $P(j)<P(j')$, and by \Crr, 
 \begin{align*}
 l_j + \Delta(P(j),j) - (l_{j'}+\Delta(P(j'),j')) &= (l_j-l_{j'}-\Delta(j,j'))+\Delta(P(j),P(j'))\\
 &\geq \alpha(j,j')+\Delta(P(j),P(j'))\quad\text{(by \eqref{ecart})}\\
 &\geq 1 + \chi(c_{P(j)}\leq c_{P(j')})\quad\text{(by \eqref{signe})}\,\cdot
 \end{align*}
 By \eqref{oo}, we conclude that $\la'_{P(j)}\gg_c \la'_{P(j')}$.
 \item Now, we consider two parts in $\E'$. For any $i<i'\in I$, we have that $P(i+1)= P(i)+1<P(i')$, and arguing as above, we have
 \begin{align*}
 l_{i+1} + \Delta(P(i+1),i+1) - (l_{i'}+\Delta(P(i'),i')) &\geq \alpha(i+1,i')+\Delta(P(i+1),P(i'))\\
 &\geq \chi(c_{P(i+1)}\leq c_{P(i')})\quad \text{(by \eqref{signe})}.
\end{align*}  
 By using \eqref{gm} and \eqref{odd}, we obtain that $\la'_{P(i)}+\la'_{P(i)+1}\gg_c \la'_{P(i')}+\la'_{P(i')+1}$.
\end{enumerate}
\item Finally, we show that parts of different kind are well-ordered.
By \Prp{pr1} and \Prp{pr11}, we can see that, for any $(j,i)\in J\times I$ such that $j<i$, parts
$\la'_{P(j)}$ and $\la'_{P(i)}+\la'_{P(i)+1}$ are well-ordered by $\gg_c$. 
\\Let us now consider the case $i+1<j$. We necessarily have that
$P(i)+1\leq i+1<j\leq P(j)$ so that $P(i)+1<P(j)$. We then obtain 
\begin{align*}
D&=2(l_{i+1}+\Delta(P(i)+1,i+1))+\Delta(P(i),P(i)+1)-(l_j-\Delta(j,P(j)))\\
&= l_{i+1}+\Delta(P(i)+1,i+1) + \Delta(P(i),i+1)+(l_{i+1}-l_j)+\Delta(j,P(j))\\
&\geq l_{i+1}+\Delta(P(i)+1,i+1) + \Delta(P(i),P(j))+\alpha(i+1,j)\quad\text{(by  \eqref{ecart})}\\
&\geq 1 + \Delta(P(i),P(j))\geq 1+\chi(c_{P(i)}\leq c_{P(i)+1})+\chi(c_{P(i)+1}\leq c_{P(j)})\quad\text{(by  \eqref{signe})}.
\end{align*}
This means by \eqref{eo} that $\la'_{P(i)}+\la'_{P(i)+1}\gg_c\la'_{P(j)}$ for all $j>i$.
\end{enumerate}
To conclude, we always have that, for any $k,k'\in J\sqcup I$, if $P(k)<P(k')$, then the corresponding parts in $\Od',\E'$ are well-ordered by $\gg_c$.
\end{proof}
\Prp{prphi} follows immediately from \Prp{pr11} and \Prp{pr12}.
\subsection{Well-definedness of $\Psi$}\label{prfpsi}
\begin{prop}\label{prpsi}
 For any $\nu\in \D$, the final result after \St is unique and belongs to $\C$.
 Moreover, the result is independent of the order in which we proceed in \Soo.
\end{prop}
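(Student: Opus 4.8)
The plan is to replay Section~\ref{prfphi} in reverse, so that \Prp{prpsi} becomes the exact mirror of \Prp{prphi}. Fix $\nu=\nu_1+\cdots+\nu_s\in\D$ with $\nu_1\gg_c\cdots\gg_c\nu_s$; note that there is no selection step this time, since the set $K\subseteq\sss$ of indices of secondary parts and its complement $L$ (indices of primary parts) are already given. First I would pass to the expanded picture in which each secondary part $\nu_i=(k_i,p_i,q_i)$, $i\in K$, is viewed as occupying the two consecutive positions of its upper half $\gamma(\nu_i)=(k_i+\delta_{p_iq_i},p_i)$ and lower half $\mu(\nu_i)=(k_i,q_i)$, while each primary part keeps one position; reading the colours off these positions gives a colour word from which a Chasles function $\Delta$ is defined exactly as in \eqref{del}. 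The crossing \eqref{mov1} of $\Lambda^{-1}$ moves a primary part two positions to the left and the two halves of a secondary part one position to the right, keeping the colour word fixed, so the positions split into upper-half positions, lower-half positions (the previous ones shifted by $1$) and primary positions, just as $I$, $I+1$, $J$ do for $\Phi$. Since $\nu\in\D$, the identities \eqref{oo}, \eqref{oe}, \eqref{eo}, \eqref{gm} translate $\nu_1\gg_c\cdots\gg_c\nu_s$ into inequalities between consecutive entries of the colour word of the form ``difference $\ge\Delta\,(+\,\alpha)$'', with an appropriate analogue of $\alpha$ and of \eqref{ecart}, which is what lets $\Delta$ do for $\Psi$ what it did for $\Phi$.

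Next I would prove, in this order, the mirrors of \Lem{lem1}, \Lem{lem2} and \Prp{pr11}. The vectorial-moves lemma states that a part starting at position $k$ and ending at position $P(k)$ has its size shifted by $\Delta(P(k),k)$ and takes the colour of its new slot; it follows by induction from \eqref{mov1} and \Crr. Then I would introduce a gauge function on $L\times K$, the analogue of $\phi$ read off from \eqref{oe}, show that it is monotone in each variable by the same computations with \Crr\ and \eqref{signe} as in \Lem{lem2}, and deduce the mirror of \Prp{pr11}: the terminal permutation $P$ is increasing on $K$ and on $L$, satisfies $P(i)\ge i$ on $K$ and $P(j)\le j$ on $L$, and a configuration is terminal precisely when every adjacency $P(\ell)+1=P(j)$ with $\ell\in K$, $j\in L$ has nonnegative gauge. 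This pins $P$ down by a closed formula in terms of the negative entries of the gauge, hence $\nu'$ (the output of \Soo) does not depend on the order of the crossings, and since \Stt\ is deterministic, $\nu''=\Psi(\nu)$ is unique.

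It remains to show $\nu''\in\C$, i.e. that its parts are positive and form a decreasing chain of $(\Od',>_c)$. Positivity of the parts of $\nu'$ follows from $P(i)\ge i$ on $K$, $P(j)\le j$ on $L$ and the analogues of \eqref{signe} and \eqref{ecart}, exactly as in item~1 of the proof of \Prp{pr12}; it is then inherited by $\nu''$ because the halves of any part of $\E'$ are positive. For the ordering I would first check, the mirror of the claim recorded before Section~\ref{phii} for $\Phi$, that throughout \Soo\ a violation of $\gg_c$ can only occur at a pair of type $\E\times\Od$ whose first part's lower half is not $>_c$ the second, so that when \Soo\ halts every consecutive pair of $\nu'$ is $\gg_c$-ordered and, moreover, every secondary part is followed, if by a primary part, by one strictly below its lower half for $>_c$. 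The three cases (two primary parts, two secondary parts, a secondary part followed by a primary part) are then settled by \eqref{oo}, \eqref{gm} and \eqref{eo} respectively, using the vectorial-moves lemma and \Prp{pr2} in the role that \Prp{pr1} plays for $\Phi$, as in items~2 and~3 of the proof of \Prp{pr12}. Finally, splitting in \Stt\ each secondary part $(k,p,q)$ of $\nu'$ into $\gamma=(k+\delta_{pq},p)$ and $\mu=(k,q)$ preserves the order: $\gamma>_c\mu$ by \eqref{odd}, and each $\gg_c$ of $\nu'$ unfolds into the required $>_c$ between the relevant halves by \eqref{oe}, \eqref{eo}, \eqref{gm} together with the halting condition of \Soo; hence $\nu''\in\C$.

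The step I expect to be the main obstacle is the mirror of \Prp{pr11}: isolating the correct gauge function on $L\times K$ and proving that the sign pattern of this single function determines the terminal permutation $P$ regardless of the order in which the crossings are performed. This forces one to control, through the Chasles bookkeeping, two things at once: that crossing an $\E\times\Od$ pair never creates a $\gg_c$-violation between parts of the same kind or between a primary and a secondary part in the order $\Od\times\E$, and that it removes only the violations it is designed to remove. This is precisely the delicate point handled for $\Phi$ by \Prp{pr11} and its accompanying lemmas, and I would expect the $\Psi$-side argument to require the analogous, somewhat intricate, manipulation of $\Delta$ and its analogue of $\alpha$.
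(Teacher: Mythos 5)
Your overall strategy is the paper's: expand $\nu$ into primary parts and the upper and lower halves of secondary parts, set up $\Delta$ and a Chasles-type counting function, prove a vectorial-moves lemma, characterize the terminal permutation by the sign pattern of a monotone gauge function, and then verify positivity and ordering case by case. Up to the uniqueness statement this matches \Prp{pr21} and would go through; one small imprecision is that the gauge is read off not from \eqref{oe} but from the crossing condition $\mu(\nu_i)\not>_c\nu_{i+1}$, i.e.\ from \eqref{odd}, which yields the much simpler $\psi(j,i)=l_j-l_i-\Delta(j,i)$ rather than an analogue of $\phi$.

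The gap is in your argument that $\nu''\in\C$. You claim that when \So halts, ``every consecutive pair of $\nu'$ is $\gg_c$-ordered,'' and you propose to unfold these $\gg_c$ relations into the $>_c$ relations needed after splitting. This claim is false: the output of \So is not $\gg_c$-ordered in general. In the paper's own worked example one gets $\nu'=24_{\ab}+17_{\ba}+11_{\ba}+19_{\abab}+8_{\ba}+11_{\aba}+8_{\baba}$, where the pair $(11_{\ba},19_{\abab})$ violates $\gg_c$ outright (the lengths are not even decreasing). Indeed \So of $\Psi$ is designed to destroy the $\gg_c$-ordering (it undoes \St of $\Phi$), and the only invariant guaranteed at termination is the halting condition $\mu(\nu'_i)>_c\nu'_{i+1}$ on pairs in $\E\times\Od$; that alone does not supply the $>_c$ relations you need across $\Od\times\E$ or $\E\times\E$ adjacencies of $\nu'$. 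What the paper does instead is prove the relevant inequalities at the \emph{initial} position (\Lem{lem22}: the bounds \eqref{ecart2} and \eqref{ecart3}, involving the doubled lengths $l'_k=2l_k$ on $I\sqcup(I+1)$ and the counting function $\beta$) and then transport them to the final position via the vectorial-moves lemma and the explicit formula for $Q$ from \Prp{pr21}; the ordering of $\nu''$ is read off from these transported inequalities, not from any ordering of the intermediate object $\nu'$. You need this, or an equivalent invariant on the expanded word of halves phrased in terms of $>_c$ rather than $\gg_c$, to complete the last third of your argument.
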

Let us consider any $\nu = \nu_1+\cdots+\nu_t \in \D$.
We now set $\nu = \nu'_1+\cdots+\nu'_s$, where we represent all primary parts that appear in $\nu$, by counting both upper and lower halves of each part in $\E'$.  We will then have as before sets 
$J,I$ respectively for indices of parts in $\Od',\E'$  such that $J\sqcup I \sqcup (I+1) = \sss$, where $s = t + \text{number of parts in }\E'$.  We then have $\nu'_k=(l_k,c_k)$, and the parts originally in $\Od'$ are 
$\nu'_j=(l_j,c_j)$ for all $j\in J$, and those originally in $\E'$ are 
$\nu'_i+\nu'_{i+1}=(l_{i+1},c_i,c_{i+1})$ for all $i\in I$.
\\\\We define $\Delta$ on $\sss^2$ in the same way we did before in \eqref{del}. Since during \textbf{Step 1} of $\Psi$, we apply $\Lambda^{-1}$, then for any position permutation $Q$ of $\sss$,
we have that $Q(i+1)=Q(i)+1$  and $Q(i)\geq i$ for all $i\in I$,
$Q$ is increasing on $I$ and $J$, $Q(j)\leq j$ for all $j\in J$ (see Remark \ref{rem1}), and finally, an analogue of \Lem{lem1} follows from \eqref{mov1}.
\begin{lem}\label{lem21}
 If the original part $\nu'_k=(l_k,c_k)$ at position $k$ moves to position $Q(k)$, then it becomes $\nu''_{Q(k)}=(\Delta(Q(k),k)+l_k,c_{Q(k)})$.
\end{lem}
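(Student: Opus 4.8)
The plan is to prove \Lem{lem21} by the same recursive argument that establishes \Lem{lem1}, reading \eqref{mov1} in place of \eqref{mov}; indeed \Lem{lem21} is the mirror image of \Lem{lem1}, with $\Lambda^{-1}$ replacing $\Lambda$. First I would dispose of the color statement, which is automatic: $\Lambda^{-1}$ preserves the sequence of primary colors of the parts, so at every stage of \Soo of $\Psi$ reading the colors from the largest part to the smallest gives $c_1,\ldots,c_s$; hence the part occupying position $j$ always has color $c_j$, and in particular the part originally at position $k$, once it sits at position $Q(k)$, has color $c_{Q(k)}$.

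For the size I would induct on the number of applications of $\Lambda^{-1}$ carried out so far in \Soo. In the base case $Q$ is the identity and the part at position $k$ is still $\nu'_k=(l_k,c_k)=(\Delta(k,k)+l_k,c_k)$, since $\Delta(k,k)=0$. For the inductive step, assume that after some applications of $\Lambda^{-1}$ the current position permutation is $Q$ and every original part $\nu'_k$ now has size $l_k+\Delta(Q(k),k)$ at position $Q(k)$. We then apply $\Lambda^{-1}$ at three consecutive current positions $n,n+1,n+2$, where $n,n+1$ carry the upper and lower halves of a secondary part of $\E$ and $n+2$ carries a primary part of $\Od$. Writing $l''_n,l''_{n+1},l''_{n+2}$ for the current sizes there, the fact that positions $n,n+1$ are the two halves of one secondary part forces by \eqref{half} and \eqref{delta} that this part is $(l''_{n+1},c_n,c_{n+1})$ with $l''_n=l''_{n+1}+\delta_{c_nc_{n+1}}=l''_{n+1}+\Delta(n,n+1)$. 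Unpacking $\Lambda^{-1}$, splitting its output secondary part into its halves via \eqref{half}, and using \eqref{mov1}, one obtains the table
\[
\begin{array}{|c|c|c|c|}
\hline
\text{positions}& n&n+1&n+2\\
\hline
\text{colors}& c_n&c_{n+1}&c_{n+2}\\
\hline
\text{sizes before }\Lambda^{-1}& l''_n&l''_{n+1}&l''_{n+2}\\
\hline
\text{sizes after }\Lambda^{-1}& \Delta(n,n+2)+l''_{n+2}&\Delta(n+1,n)+l''_n&\Delta(n+2,n+1)+l''_{n+1}\\
\hline
\end{array}
\]
so that one step of $\Lambda^{-1}$ realizes the cyclic move ``content of $n+2$ goes to $n$, content of $n$ goes to $n+1$, content of $n+1$ goes to $n+2$'', each slot's size being corrected by $\Delta(\text{new position},\text{old position})$. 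Combining this with the inductive hypothesis and invoking \Cr in the form $\Delta(n,n+2)+\Delta(n+2,k)=\Delta(n,k)$ (and its analogues for the other two slots) preserves the claimed formula, while positions outside $\{n,n+1,n+2\}$ are left untouched; this closes the induction.

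The only genuine computation is the verification of that table, namely that $\Lambda^{-1}\big((l''_{n+1},c_n,c_{n+1}),(l''_{n+2},c_{n+2})\big)$ equals $\big(l''_{n+2}+\delta_{c_nc_{n+1}}+\delta_{c_{n+1}c_{n+2}},c_n\big),\big(l''_{n+1}-\delta_{c_{n+1}c_{n+2}},c_{n+1},c_{n+2}\big)$ and that, after taking upper and lower halves, the three slots change exactly as tabulated. This is direct but a little fiddly — one must keep straight that the adjacent differences $\delta_{c_nc_{n+1}}$ and $\delta_{c_{n+1}c_{n+2}}$ are precisely $\Delta(n,n+1)$ and $\Delta(n+1,n+2)$ — and it is the step I would expect to demand the most care, though no real obstacle arises since it is exactly the transpose of the computation already done for \Lem{lem1}.
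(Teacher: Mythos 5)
Your proposal is correct and takes essentially the same route as the paper, which proves \Lem{lem21} only by remarking that it is the analogue of \Lem{lem1} obtained by reading \eqref{mov1} in place of \eqref{mov}; your table for one application of $\Lambda^{-1}$ is exactly the mirrored version of the table in the proof of \Lem{lem1}, and your size computations at positions $n$, $n+1$, $n+2$ check out. The only added content in your write-up is that you make explicit the verification (via \eqref{half} and \Crr) that the paper leaves implicit, which is fine.
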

\begin{prop}\label{pr21}
 Let $\psi$ be the function on $J\times I$ defined by 
 \begin{equation}
  \psi : (j,i) \,\mapsto \, l_j-l_i-\Delta(j,i)\,\cdot
 \end{equation}
Then for any $i\in I$, the final position (after \Soo) of the part originally at position $i$ is 
\begin{equation}
 Q(i) = i + |\{j\in J/ j>i\,, \psi(j,i)>0\}|\,\cdot
\end{equation}
\end{prop}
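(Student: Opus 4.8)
The plan is to mirror the proof of \Prp{pr11}, but run through $\Lambda^{-1}$ instead of $\Lambda$ and with the reversed direction of movement encoded in \Lem{lem21}. First I would record the analogue of \Lem{lem2}: the gauge function $\psi(j,i)=l_j-l_i-\Delta(j,i)$ is increasing in $j\in J$ and decreasing in $i\in I$. This follows exactly as before from \Cr and the bounds \eqref{signe}, \eqref{ecart}: for $j<j'$ in $J$ we get $\psi(j',i)-\psi(j,i)=l_{j'}-l_j+\Delta(j,j')$, and since here $j,j'$ both lie in $J$ and the chain is well-ordered by $\gg_c$, the quantity $l_j-l_{j'}-\Delta(j,j')$ is bounded above by $-\alpha(j,j')\le 0$; the monotonicity in $i$ is similar. (I would be careful that the inequality now runs the opposite way compared to the $\Phi$ side, since in $\D$ we start well-ordered and we only ever create disorder, not destroy it.)

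Next I would argue, as in the first bullet of the proof of \Prp{pr11}, that if $Q$ is the final position then there is no pair $(j,i)\in J\times I$ with $j>i$, $Q(j)>Q(i)$ and $\psi(j,i)>0$. Suppose such a pair exists; by the monotonicity of $\psi$ the inequality $\psi(j',i')>0$ holds for all $j'\ge j$ in $J$ and all $i'\le i$ in $I$. Because $Q$ is a bijection of $\sss$, increasing on $J$ and on $I\sqcup(I+1)$, and $Q(J)-1\setminus Q(J)\subseteq Q(I+1)$ (the mirror of the containment used before), one can extract a consecutive pair $j',i'$ with $Q(i')+1=Q(j')$ and $i'<j'$. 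Applying \Lem{lem21} exactly as in the $D$-computation in \Prp{pr11}, the relevant length difference equals $\psi(j',i')>0$, so by the $\mathcal{O}\times\mathcal{E}$ criterion \eqref{oe} we have $\mu$ of the secondary part strictly greater than the following primary part — wait, more precisely the pair $(\nu''_{Q(i')}+\nu''_{Q(i')+1},\nu''_{Q(j')})$ is a secondary-primary pair that still does \emph{not} satisfy $\gg_c$, or rather it does, which contradicts $Q$ being final, because a further $\Lambda^{-1}$ move would be triggered only when $\mu\not>_c$, i.e.\ when the difference is negative; so I must check the sign conventions against \Prp{pr2} and \eqref{eo} carefully. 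The conclusion of this step is that $Q(i)>Q(j)$ is forced for every $(j,i)\in J\times I$ with $j>i$ and $\psi(j,i)>0$.

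Then, for the converse direction, I would show that once every such pair has been crossed no further $\Lambda^{-1}$ is possible: if $Q(i)+1=Q(j)$ for some adjacent pair, then necessarily $i<j$, and the computation above shows the governing quantity is $\psi(j,i)$; if $\psi(j,i)\le 0$ then by \eqref{eo}/\Prp{pr2} the pair is already well-ordered in the sense required, so no move applies. Combining the two halves gives, for each $i\in I$,
\[
Q(i)-i=|\{j\in J:\ j>i,\ \psi(j,i)>0\}|\,,
\]
which is the claimed formula.

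The main obstacle I expect is bookkeeping the sign and direction conventions: on the $\Phi$ side a primary part moves forward and $P(i)\le i$, whereas here a secondary part moves forward and $Q(i)\ge i$, so the containment $Q(J)-1\setminus Q(J)\subseteq Q(I+1)$, the choice of the extremal adjacent pair, and the direction of every inequality coming from \eqref{signe} and \eqref{ecart} all flip. Getting the explicit $D$-computation via \Lem{lem21} to collapse cleanly to $\psi(j,i)=l_j-l_i-\Delta(j,i)$ — in particular identifying $Q(j)$ as $j-\alpha(i,j)$ in the mirror of the formula $P(j')=i'-\alpha(j',i')$ — is the one place where a genuine (though short) calculation with \Cr is unavoidable, and it is where an error in the conventions would surface.
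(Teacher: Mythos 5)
Your overall strategy matches the paper's (monotonicity of the gauge, extraction of an adjacent crossable pair, then the converse), but two of the three load-bearing steps are wrong or left unresolved. First, the monotonicity of $\psi$ is stated backwards. For $j<j'$ in $J$, the well-ordering of $\nu$ by $\gg_c$ gives, via \eqref{ecart2}, the \emph{lower} bound $l_j-l_{j'}-\Delta(j,j')\geq \beta(j,j')\geq 1$, not an upper bound by $-\alpha(j,j')$ (note that \eqref{ecart} and $\alpha$ live on the $\Phi$ side and are not available here); hence $\psi(j',i)-\psi(j,i)\leq -1$, so $\psi$ is \emph{decreasing} in $J$ and, by \eqref{ecart3}, \emph{increasing} in $I$ --- exactly the same directions as for $\phi$ in \Lem{lem2}; nothing flips. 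This matters: the adjacent pair one extracts from the hypothesis $Q(j)>Q(i)$ satisfies $i\leq i'<j'\leq j$ with $Q(i')+2=Q(j')$ (not $Q(i')+1=Q(j')$; the secondary part occupies two slots), and to deduce $\psi(j',i')>0$ from $\psi(j,i)>0$ you need precisely decreasing-in-$J$ and increasing-in-$I$. With your directions the extraction and the monotonicity point opposite ways and the argument collapses.

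Second, the crux --- that $\psi(j',i')>0$ for such an adjacent pair means $\Lambda^{-1}$ is still applicable --- is exactly the point you leave hanging (``does not satisfy $\gg_c$, or rather it does\dots I must check the sign conventions''). The trigger for a move in \So of $\Psi$ is $\mu(\nu''_{Q(i')}+\nu''_{Q(i')+1})\not>_c\nu''_{Q(j')}$, which is governed by \eqref{odd}, not by \eqref{oe}. Using \Lem{lem21} and \Cr one telescopes $\Delta(j',i')$ through $Q(j')$ and $Q(i')+1$ to obtain $\psi(j',i')=(l_{j'}+\Delta(Q(j'),j'))-(l_{i'+1}+\Delta(Q(i')+1,i'+1))-\Delta(Q(j'),Q(i')+1)$, and by \eqref{odd} this quantity is positive exactly when $\mu\not>_c$ holds; in particular no explicit formula for $Q(j')$ in terms of $\alpha$ is needed here, since unlike $\phi$ the gauge $\psi$ carries no $\alpha$-dependent term. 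Until you fix the monotonicity and actually carry out this sign check, the proof is incomplete.
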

\begin{prop}\label{pr22}
 The final result after \textbf{Step 2} is in $\C$.
\end{prop}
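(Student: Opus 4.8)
The plan is to run the argument in close parallel with the proof of \Prp{pr12}, now following the crossings through $\Lambda^{-1}$ rather than $\Lambda$ and taking into account the final splitting in \textbf{Step 2}. By the vectorial--moves \Lem{lem21}, the primary part initially at position $k$ of the primary representation of $\nu$ moves to position $Q(k)$ and becomes $\nu''_{Q(k)}=(\Delta(Q(k),k)+l_k,c_{Q(k)})$; since \textbf{Step 2} only divides each secondary part into its halves, it merely restores the primary representation, so $\Psi(\nu)$ is exactly the sequence $\nu''_1+\cdots+\nu''_s$ described in this way. Hence what must be shown is that (i) every $\nu''_m$ lies in $\mathcal{O}'$, i.e.\ has positive length, and (ii) $\nu''_1>_c\cdots>_c\nu''_s$, which together say precisely that $\Psi(\nu)\in\mathcal{C}$.

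First I would translate $\nu\in\mathcal{D}$ into inequalities on the $l_k$: for $i\in I$ the two halves of a secondary part give $l_i-l_{i+1}=\Delta(i,i+1)$, and between consecutive blocks $\gg_c$, read off from \eqref{oo}, \eqref{oe}, \eqref{eo} and \eqref{gm}, gives lower bounds on the successive differences $l_k-l_{k+1}$; summing these via \Crr produces the analogue of \eqref{ecart} and the monotonicity of the gauge $\psi$ playing here the role of \Lem{lem2}, whence \Prp{pr21} follows exactly as \Prp{pr11} did (with \Prp{pr2} used in place of \Prp{pr1}). Concretely, the secondary part originally at $i$ ends, after \textbf{Step 1}, having crossed exactly those $j\in J$ with $j>i$ and $\psi(j,i)>0$, so its upper and lower halves land at positions $Q(i)$ and $Q(i)+1=Q(i+1)$.

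For (i): if $j\in J$ then $Q(j)\le j$, so $\Delta(Q(j),j)\ge0$ by \eqref{signe} and $\nu''_{Q(j)}$ has length $\ge l_j\ge1$. If $i\in I$, a \Crr computation shows the two halves have lengths differing by $\chi(c_{Q(i)}\le c_{Q(i)+1})$, so they genuinely form a secondary part, and it suffices to bound the lower half, of length $l_{i+1}-\Delta(i+1,Q(i+1))$, below by $1$. For (ii): the adjacencies of $\nu''$ come in three kinds --- two consecutive $J$-parts, the two halves of one secondary, or a boundary between a $J$-part and a half of a secondary (in either order, or the lower half of one block against the upper half of the next). The first is settled as in the case of two $\mathcal{O}'$-parts in the proof of \Prp{pr12}, now with $>_c$ via \eqref{odd}; the second is automatic since $\gamma(S)>_c\mu(S)$ always; and the third, as in the cross-kind case there, follows from \Prp{pr2} together with \Prp{pr21}, \eqref{signe} and \Crr applied to the boundaries that \textbf{Step 1} did not touch, using that \textbf{Step 1} halts exactly when every $\mathcal{E}\times\mathcal{O}$ adjacency $(S,P)$ satisfies $\mu(S)>_c P$.

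The step I expect to be the main obstacle is precisely the positivity in (i) for the halves of secondary parts: unlike in $\Phi$, where secondary parts travel leftward and lengths only grow, here they travel rightward, and $\Delta(i+1,Q(i+1))\le Q(i+1)-(i+1)$ is exactly the number of $J$-parts the block crosses. The way I would handle this is to carry along \textbf{Step 1} the invariant that every $\mathcal{E}\times\mathcal{O}$ adjacency stays ordered by $\gg_c$ --- preserved under $\Lambda^{-1}$ by \Prp{pr1} and relations \eqref{oe}, \eqref{eo}, \eqref{ee} --- which forces any crossing that lowers a secondary's index to start from index at least $2$, so that the index, hence the lower half, is always $\ge1$ at the end; the stopping criterion then pins down the boundary orderings needed in (ii). Making the spread lemma precise requires a little care because the $\gg_c$-conditions of $\mathcal{D}$ are inhomogeneous (the $2\delta_{qr}$ term of \eqref{ee}, and the weight--$2$ role of the colors $a_i^2$): at a boundary where the lower half of a secondary precedes a primary the condition bounds $2l_k-l_{k+1}$ rather than $l_k-l_{k+1}$, but across a complete secondary block these combine to the clean bound. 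Once \Prp{pr21} and \Prp{pr22} are in hand, \Prp{prpsi} follows at once, just as \Prp{prphi} followed from \Prp{pr11} and \Prp{pr12}.
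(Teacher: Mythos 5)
Most of your outline coincides with the paper's proof: you introduce the spread $l'_k=2l_k$ on $I\sqcup(I+1)$ to homogenize the $\gg_c$-conditions (this is exactly \Lem{lem22} and \eqref{ecart2}), you use \Lem{lem21} to describe the final parts, you settle the positivity of the $J$-parts and the well-ordering of same-kind pairs as the paper does, and you invoke \Prp{pr2} together with \Prp{pr21} for the cross-kind adjacencies with $j>i$. (For $j<i$ — a $J$-part that ends up to the left of a secondary block it started behind — the paper needs one more direct computation using $2l_i\geq 1+l_i$ and \eqref{ecart2}; your phrase about ``boundaries that Step 1 did not touch'' gestures at this but does not carry it out.)

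The genuine gap is at the step you yourself flag as the main obstacle: positivity of the lower halves. Your proposed mechanism — an invariant that every $\E\times\Od$ adjacency stays $\gg_c$-ordered throughout \textbf{Step 1}, ``preserved under $\Lambda^{-1}$ by \Prp{pr1}'' — is not established by what you cite. \Prp{pr1} and \Prp{pr2} control only the pair being crossed, not the two new adjacencies that each application of $\Lambda^{-1}$ creates (the displaced secondary part against its new right neighbour, and the displaced primary part against its new left neighbour); moreover the crossing criterion of \textbf{Step 1} is $\mu(\nu_i)\not>_c\nu_{i+1}$, which is strictly weaker than $\nu_i\not\gg_c\nu_{i+1}$ by \eqref{eo}, so crossings are performed even on pairs that are already $\gg_c$-ordered, and your invariant would have to be propagated by a separate multi-case induction (also tracking that the relevant $\Od\times\Od$ and $\E\times\E$ orderings persist as inputs). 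None of that is done, and without it the deduction ``index at least $2$ before any decreasing crossing'' has no support: the local crossing condition $k-l<\delta_{qr}$ gives only an upper bound on the secondary index $k$. The paper dispenses with all of this by a single global inequality: applying \eqref{ecart2} to the last lower half $i+1$ and the last index $s$ gives $2l_{i+1}-l'_s\geq\beta(i+1,s)+\Delta(i+1,s)\geq 2\Delta(i+1,s)$, whence $l_{i+1}-\Delta(i+1,Q(i+1))\geq\Delta(Q(i+1),s)+\tfrac{1}{2}l'_s>0$, and monotonicity of $Q$ on $I\sqcup(I+1)$ reduces everything to this last block. You should replace your invariant argument by this computation (or supply the full induction it silently requires).
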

Before proving these, let us first consider the function $\beta$ on $\sss^2$ as follows:
\begin{equation}\label{difff}
 \beta : \,\,(i,j) \mapsto \left\lbrace \begin{array}{l c l}                                   
                                     | (i,j]\cap J|& \text{if}& i\leq j\\
                                     -\beta(j,i)& \text{otherwise}&\\
                                    \end{array}
                       \right. \,,
\end{equation}
and we can easily see that $\beta$ satisfies \Crr.
We now state an important lemma.
\begin{lem}\label{lem22} Let us set 
\[\l'_k = \left\lbrace \begin{array}{l c l}
                        l_k &\text{if}& k\in J\\
                        2l_k&\text{if}&k\in I\sqcup (I+1)
                       \end{array}
              \right.\,\cdot
 \]
 Then for all $k\leq k'\in \sss$, we have 
 \begin{equation}\label{ecart2}
 l'_k-l'_{k'} \geq \beta(k,k')+\Delta(k,k')\,\cdot
 \end{equation}
Morever, for all $i\leq i' \in I\sqcup (I+1)$, we have 
\begin{equation}\label{ecart3}
 l_i-l_{i'}\geq \Delta(i,i')\,\cdot
\end{equation}
\end{lem}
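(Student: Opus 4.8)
The plan is to reduce both inequalities of the lemma to the defining $\gg_c$-relations between \emph{consecutive} parts of $\nu$, rewritten via \eqref{oo}, \eqref{oe}, \eqref{eo} and \eqref{gm}, and then to telescope using \Cr for $\Delta$ and for $\beta$ (and the trivial telescoping of the sequence $l'$). The one auxiliary identity used repeatedly is that for $i\in I$ the secondary part $\nu'_i+\nu'_{i+1}=(l_{i+1},c_i,c_{i+1})$ has upper half $\nu'_i$ and lower half $\nu'_{i+1}$, so by \eqref{half}--\eqref{div} one has the \emph{exact} equality $l_i-l_{i+1}=\delta_{c_ic_{i+1}}=\Delta(i,i+1)$.

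\textbf{Proof of \eqref{ecart2}.} First I would establish, for every consecutive pair $k,k+1\in\sss$, the inequality $l'_k-l'_{k+1}\ge \beta(k,k+1)+\Delta(k,k+1)$, where $\beta(k,k+1)=\chi(k+1\in J)$. There are only a handful of cases according to the membership of $k$ and $k+1$ in $J$, $I$, $I+1$. When $k\in I$ and $k+1\in I+1$ the two positions are the halves of a single secondary part of $\nu$, so the bound is exactly the identity $l_k-l_{k+1}=\Delta(k,k+1)$ (and here $\beta(k,k+1)=0$). In every other case $k$ and $k+1$ belong to distinct parts of $\nu$, hence the corresponding consecutive parts of $\nu$ satisfy $\gg_c$; rewriting that relation with \eqref{oo} (primary, primary), \eqref{oe} (primary, secondary), \eqref{eo} (secondary, primary) or \eqref{gm} (secondary, secondary), and substituting $l_i-l_{i+1}=\delta_{c_ic_{i+1}}$ for any half involved, gives precisely $l'_k-l'_{k+1}\ge \beta(k,k+1)+\Delta(k,k+1)$. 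Summing these over $m=k,\dots,k'-1$ and invoking \Cr for $\Delta$ and for $\beta$ yields \eqref{ecart2}.

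\textbf{Proof of \eqref{ecart3}.} Write $I\sqcup(I+1)=\{j_1<j_2<\cdots\}$. By \Cr for $\Delta$ it suffices to prove $l_{j_m}-l_{j_{m+1}}\ge\Delta(j_m,j_{m+1})$ for consecutive $j_m,j_{m+1}$ and then sum. If $j_m\in I$ then $j_{m+1}=j_m+1$ and this is the exact identity above. The genuine case is $j_m=:k\in I+1$, $j_{m+1}=:k'\in I$, with all of $k+1,\dots,k'-1$ in $J$; set $r:=k'-k$. If $r=1$ the secondary parts $\nu'_{k-1}+\nu'_k$ and $\nu'_{k'}+\nu'_{k'+1}$ are consecutive in $\nu$, and \eqref{gm} gives $l_k-l_{k'}\ge\delta_{c_kc_{k'}}=\Delta(k,k')$ directly. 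If $r\ge 2$, I would add up the whole chain of $\gg_c$-relations in $\nu$,
\[(\nu'_{k-1}+\nu'_k)\gg_c\nu'_{k+1}\gg_c\cdots\gg_c\nu'_{k+r-1}\gg_c(\nu'_{k'}+\nu'_{k'+1}),\]
using \eqref{eo} for the first relation, \eqref{oo} for the $r-2$ middle ones, and \eqref{oe} for the last. On the left the terms telescope to $2l_k-2l_{k'}$, and on the right, after using \Cr for $\Delta$, the contributions sum to $(r-1)+\Delta(k,k')$, so that $2(l_k-l_{k'})\ge (r-1)+\Delta(k,k')$. Since $l_k-l_{k'}\in\mathbb Z$ and $\Delta(k,k')\le k'-k=r$ by \eqref{signe}, this forces $l_k-l_{k'}\ge\Delta(k,k')$; the only borderline case $r=\Delta(k,k')$ makes $(r-1)+\Delta(k,k')=2\Delta(k,k')-1$ odd, so rounding up still gives $\Delta(k,k')$. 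Summing over consecutive elements of $I\sqcup(I+1)$ gives \eqref{ecart3}.

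\textbf{Main obstacle.} The delicate point is the case $r\ge2$ of \eqref{ecart3}. One must keep careful track of which links of the chain contribute a "$+1$" to the right-hand side (the secondary$\to$primary link from \eqref{eo} and the primary$\to$primary links from \eqref{oo} do; the primary$\to$secondary link from \eqref{oe} does not), and even then the telescoped inequality $2(l_k-l_{k'})\ge(r-1)+\Delta(k,k')$ is off by one from the target $l_k-l_{k'}\ge\Delta(k,k')$. It is precisely the integrality of $l_k-l_{k'}$ together with the bound $\Delta(k,k')\le r$ from \eqref{signe} that bridges this gap. Everything else — the case enumeration for \eqref{ecart2} and all the telescopings — is routine.
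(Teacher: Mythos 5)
Your proposal is correct and follows essentially the same route as the paper: both reduce \eqref{ecart2} to consecutive positions via a case analysis on membership in $J$, $I$, $I+1$ (using the exact identity $l_i-l_{i+1}=\Delta(i,i+1)$ for the two halves of a secondary part together with \eqref{oo}, \eqref{oe}, \eqref{eo}, \eqref{gm}) and then telescope with \Crr, and both settle \eqref{ecart3} in the case $i\in I+1$, next index in $I$, by combining the summed inequality $2(l_i-l_{i'})\geq \beta(i,i')+\Delta(i,i')$ with the bound $\Delta(i,i')\leq i'-i$ from \eqref{signe} and the integrality of $l_i-l_{i'}$. The only cosmetic difference is that you re-derive the chain sum for \eqref{ecart3} directly instead of citing the already-proved \eqref{ecart2}, and treat $r=1$ separately via \eqref{gm}.
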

\begin{proof}
Since the functions $\beta$ and $\Delta$ satisfy \Crr, in order to show \eqref{ecart2}, we just need to prove that for all $k\in \ssss$,
\[l'_k-l'_{k+1}\geq \beta(k,k+1)+\Delta(k,k+1)\,\cdot\]
\begin{itemize}
 \item If $k\in I$, then $k+1\in I+1$ and 
 \[l'_k-l'_{k+1} = 2\Delta(k,k+1)\geq \Delta(k,k+1) = \beta(k,k+1)+\Delta(k,k+1)\,\cdot\]
 \item If $k\in I+1$ and $k+1\in I$, then by \eqref{ee},
 \[(l_k,c_{k-1},c_k)\gg_c(l_{k+2},c_{k+1},c_{k+2}) \Leftrightarrow l'_k-l'_{k+1} \geq  2\Delta(k,k+1)\geq \beta(k,k+1)+\Delta(k,k+1)\,\cdot\]
 \item If $k\in I+1$ and $k+1\in J$, then by \eqref{eo},
 \[(l_k,c_{k-1},c_k)\gg_c(l_{k+1},c_{k+1}) \Leftrightarrow l'_k-l'_{k+1} \geq  1+ \Delta(k,k+1)=\beta(k,k+1)+\Delta(k,k+1)\,\cdot\]
 \item If $k\in J$ and $k+1\in I$, then by \eqref{oe},
 \[(l_k,c_k)\gg_c(l_{k+2},c_{k+1},c_{k+2}) \Leftrightarrow l'_k-l'_{k+1} \geq  \Delta(k,k+1)=\beta(k,k+1)+\Delta(k,k+1)\,\cdot\]
 \item If $k,k+1\in J$, then by \eqref{oo},
 \[(l_k,c_k)\gg_c(l_{k+1},c_{k+1}) \Leftrightarrow l'_k-l'_{k+1} \geq 1+\Delta(k,k+1)=\beta(k,k+1)+\Delta(k,k+1)\,\cdot\]
\end{itemize}
To show for \eqref{ecart3}, we only need to prove it for two consecutive $i,i' \in I\sqcup I+1$. It is easy for $i\in I$, since the following index is $i+1\in I+1$, and 
$l_i-l_{i+1}=\Delta(i,i+1)$. Now let us take $i\in I+1$. The next $i'$ (if it exists) must necessarily be in $I$, and by \eqref{ecart2}, we have by definition of $\beta$ and 
\eqref{signe} that
\begin{align*}
2(l_i-l_{i'})&= l'_i-l'_{i'}\\
& \geq \beta(i,i')+\Delta(i,i') \\
&= i'-i-1+\Delta(i,i')\\
&\geq 2\Delta(i,i')-1\,
\end{align*}
and this implies that
$ l_i-l_{i'}\geq \Delta(i,i')- \frac{1}{2}$, 
and since $l_i-l_{i'}$ is an integer, we then have
$l_i-l_{i'}\geq \Delta(i,i')$.
\end{proof}
\begin{proof}[Proof of \Prp{pr21}]
 With \Crr, we can easily see that $\psi$ is decreasing according to $J$ (by using \eqref{ecart2}), and increasing according to $I$ (by \eqref{ecart3}). Let  $Q$ be the final position of \Soo.
 \begin{itemize}
  \item Suppose that there exist $(j,i)\in J\times I$ such that $j>i,\, \psi(j,i)>0$ but $Q(j)>Q(i)$.
  Then by the same reasoning as in the proof of \Prp{pr11}, there exist $i\leq i'<j'\leq j$ such that $Q(i')+2=Q(j')$ (since $Q(J)-1\setminus Q(j)\subset Q(I)+1$). We also have $\psi(j',i')>0$.
  But 
  \begin{align*}
  0&<\psi(j',i')\\
  &= l_{j'}-l_{i'}-\Delta(j',i')\\
  &= l_{j'}-l_{i'+1}-\Delta(i',i'+1)-\Delta(j',Q(j'))-\Delta(Q(j'),Q(i')+1)-\Delta(Q(i')+1,i')\\
  &= (l_{j'}+\Delta(Q(j'),j'))-(l_{i'+1}+\Delta(Q(i')+1,i'+1))-\Delta(Q(j'),Q(i')+1)\,,  
  \end{align*}
  so that, by \eqref{odd}, $\nu''_{Q(i')+1} = \mu(\nu''_{Q(i')}+\nu''_{Q(i')+1})\not >_c \nu''_{Q(j')}$, and then we can still apply $\Lambda^{-1}$, and $Q$ is no longer the final position.
  \\\\We must then cross all pairs $(j,i)\in J\times I$ such that $j>i,\, \psi(j,i)>0$ before reaching the final position.
  \item After crossing all such pairs, we cannot cross anymore.  In fact, if  $(j,i)\in J\times I$ such  that $Q(i)+2 = Q(j)$, then $j>i,\, \psi(j,i)\leq 0$, and a calculation as above shows that $\nu''_{Q(i)+1} = \mu(\nu''_{Q(i)}+\nu''_{Q(i)+1}) >_c \nu''_{Q(j)}$.
 \end{itemize}
\end{proof}
\begin{proof}[Proof of \Prp{pr22}]
$\quad$
\begin{enumerate}
 \item Parts remain in $\Od'$.
\begin{enumerate}
 \item For any $j\in J$, we have that $Q(j)\leq j$, so that by \eqref{signe}, \[l_{j}+\Delta(Q(j),j)\geq l_{j}>0\,\cdot\]
 \item Since $Q$ is increasing on $I\sqcup (I+1)$, we just need to check for the last $i+1\in I+1$ that $l_{i+1}-\Delta(i+1,Q(i+1))>0$.
 But we have by \eqref{ecart2} and \eqref{signe} that
 \begin{align*}
 2l_{i+1}-l'_s &\geq \beta(i+1,s)+\Delta(i+1,s)\\
 &= s-(i+1)+\Delta(i+1,s)\\
 &\geq 2\Delta(i+1,s)\,,
 \end{align*}
 so that
 \[l_{i+1}-\Delta(i+1,Q(i+1))\geq \Delta(Q(i+1),s)+\frac{1}{2}l'_s>0\,\cdot\]
\end{enumerate}
\item Parts coming the same kind are well-ordered. 
\begin{enumerate}
 \item For any $j<j'\in J$, we have that $Q(j)<Q(j')$, and by \Cr and \eqref{ecart2}, 
 \begin{align*}
 l_j + \Delta(Q(j),j) - (l_{j'}+\Delta(Q(j'),j')) &\geq \beta(j,j')+\Delta(Q(j),Q(j'))\\
 &\geq 1 + \chi(c_{Q(j)}\leq c_{Q(j')})\,\cdot
 \end{align*}
 Then, by \eqref{oo}, we obtain  $\nu''_{Q(j)}\gg_c\nu''_{Q(j')}$.
 \item For any $i<i'\in I$, we have that $Q(i+1)= Q(i)+1<Q(i')$, and we obtain by \eqref{ecart3}
 \begin{align*}
 l_{i+1} + \Delta(Q(i+1),i+1) - (l_{i'}+\Delta(Q(i'),i'))&\geq \beta(i+1,i')+\Delta(Q(i+1),Q(i'))\\
 &\geq \chi(c_{Q(i+1)}\leq c_{Q(i')})\,\cdot
 \end{align*}
 By \eqref{odd}, we have that $\nu''_{Q(i)+1}>_c \nu''_{Q(i')}$.
\end{enumerate}
\item Parts coming from different kinds are well-ordered. In fact, by \Prp{pr2} and \Prp{pr21}, we can see that, for any $(j,i)\in J\times I$ such that $j>i$, we have
\begin{enumerate}
 \item $\psi(j,i)\leq 0 \Longleftrightarrow Q(i)+1<Q(j) \Longrightarrow \nu''_{Q(i)+1}>_c \nu''_{Q(j)}$,
 \item $\psi(j,i) > 0 \Longleftrightarrow Q(j)<Q(i) \Longrightarrow \nu''_{Q(j)}\gg_c \nu''_{Q(i)} $,
\end{enumerate}
Let us now consider the case $j<i$. We necessarily have that
$Q(j)\leq j<i\leq Q(i)$ so that $Q(j)<Q(i)$. We then have
\begin{align*}
\nu''_{Q(j)}-\nu''_{Q(i)} &= l_j-l_i - \Delta(j,i) + \Delta(Q(j),Q(i))\\
&\geq 1+ l'_j-l'_i - \Delta(j,i) + \Delta(Q(j),Q(i))\quad (\text{since }l'_i=2l_i\geq 1+l_i) \\ 
&\geq 1+  \Delta(Q(j),Q(i)) \quad \text{(by \eqref{ecart2})}\\
&\geq 1 + \chi(c_{Q(j)}\leq c_{Q(i)})
\end{align*}
We thus obtain  $\nu''_{Q(j)}\gg_c\nu''_{Q(i)}$. 
\end{enumerate}
To conclude, we observe that the final position is such that
\begin{itemize}
 \item For any $i\in I\,,\,\nu''_{Q(i)}= \nu''_{Q(i)+1}+\Delta(Q(i),Q(i)+1)\,,$
 \item for any $i\in (I+1)\cap\ssss\,\,,\nu''_{Q(i)}>_c\nu''_{Q(i)+1}\,,$
  \item for any $j\in J\cap\ssss\,\,,\nu''_{Q(j)}\gg_c\nu''_{Q(j)+1}\,,$
\end{itemize}
so that the final result $\nu'' =\nu''_1+\cdots+\nu''_s$ belongs to $\C$. Moreover, the set $Q(I)$ is exactly what we obtain for the set of indices of upper halves in \So by applying $\Phi$ on $\nu''$.

\end{proof}
We conclude the proof of \Prp{prpsi} by gathering \Prp{pr21} and \Prp{pr22}. 

\subsection{Reciprocity between $\Phi$ and $\Psi$}\label{recip}
In this section, we will show that $\Psi\circ\Phi = Id_{\C}$ and $\Phi\circ\Psi = Id_{\D}$.
\begin{enumerate}
\item Let us consider $\la\in \C$ and $\Phi(\la)=\la'_1+\cdots+\la'_{s}\in \D$ as in \Sct{prfphi}. The sets $P(I),P(J)$ are exactly what we obtained for indices of upper halves and parts that remain in $\Od'$.
By \Lem{lem1}, we then have for any $(j,i)\in J\times I$
\begin{align*}
\psi(P(j),P(i))& = (l_j+\Delta(P(j),j))-(l_i+\Delta(P(i),i))-\Delta(P(j),P(i))\\
&=l_j-l_i -\Delta(j,i)\,\cdot
\end{align*}
We thus conclude by \eqref{ecart} that
\begin{itemize}
\item for $j<i$, $\psi(P(j),P(i))\geq \alpha(j,i)>0$,
\item for $j>i$, $\psi(P(j),P(i))\leq \alpha(j,i)\leq 0$.
\end{itemize}
By \Prp{pr21} and \Prp{pr11}, we  then have  for any $i\in I$
\begin{align*}
Q(P(i))-P(i)&= |\{j\in J/\, P(j)>P(i), \psi(P(j),P(i))>0\}|\\
&= |\{j\in J/\, P(j)>P(i), j<i\}|\\
&= |\{j\in J/\, j<i,\phi(j,i)<0\}|\\
&= i-P(i)\,\,
\end{align*}
so that $Q(P(i))=i$, and then $Q(P(I))=I$ and $Q(P(I+1))=I+1$. By Remark \ref{rem1}, we also have $Q(P(J))=J$. This means by \Lem{lem21} and \Lem{lem1} that $\Psi(\Phi(\la))=\la$.
\item Let us now consider $\Psi(\nu)  = \nu''_1+\cdots+\nu''_s \in \C$ as in \Sct{prfpsi}. We saw  that after \So of $\Phi$, we exactly obtain $Q(I)$ as the set of indices of upper halves, $Q(I)+1$ the set of indices of lower halves, and 
$Q(J)$ the set of indices of parts that stay in $\Od'$.
By evaluating the corresponding function $\alpha$, we have for $Q(k)\leq Q(k') \in \sss$ that
\[\alpha(Q(k),Q(k')) = |[Q(k),Q(k'))\cap Q(J)|\,\text{ and }\alpha(Q(k'),Q(k))=-\alpha(Q(k),Q(k'))\,\cdot\]
Then for any $j,i\in J\times I$, we have by \Cr
\begin{align*}
\phi(Q(j),Q(i)) &= (l_j+\Delta(Q(j),j))-2(l_{i+1}-\Delta(i+1,Q(i)+1))-\Delta(Q(j),Q(i)+1)\\
&\quad-\Delta(Q(i)+1-\alpha(Q(j),Q(i)),Q(i)+1)\\
&=l_j-2l_{i+1}-[\Delta(j,Q(j)) + \Delta(Q(i)+1,i+1)+\Delta(Q(j),Q(i)+1)]\\
&\quad- [\Delta(Q(i)+1,i+1)+\Delta(Q(i)+1-\alpha(Q(j),Q(i)),Q(i)+1)]\\
& = l_j-2l_{i+1}-\Delta(j,i+1) -\Delta(Q(i)+1-\alpha(Q(j),Q(i)),i+1)\,\cdot
\end{align*}
But by computing $\alpha(Q(j),Q(i))$, since $Q$ is increasing on $J$ and $I\sqcup I+1$, we obtain
\begin{align*}
\alpha(Q(j),Q(i)) &=\alpha(1,Q(i))-\alpha(1,Q(j))\\
&= |[1,Q(i))\cap Q(J)|-|[1,Q(j))\cap Q(J)|\\
&= Q(i)-1 - |[1,Q(i))\cap Q(I\sqcup I+1)|-|[1,Q(j))\cap Q(J)|\\
&= Q(i)-1 - |[1,i)\cap (I\sqcup I+1)|-|[1,j)\cap J|\\
&= Q(i)-i + |[1,i)\cap J|-|[1,j)\cap J|\,\cdot
\end{align*}
We then have 
\begin{align}
 \phi(Q(j),Q(i)) &= l_j-2l_{i+1}-\Delta(j,i+1) -\Delta(i+1-|[1,i)\cap J|+|[1,j)\cap J|,i+1)\label{dif2}\\
 &= l_j-2l_i-\Delta(j,i) -\Delta(i+1-|[1,i)\cap J|+|[1,j)\cap J|,i)\label{dif3}\,\cdot
\end{align}
By \eqref{ecart2}, this gives :
\begin{enumerate}
 \item for $j<i$, by using \eqref{dif3}
 \begin{align*}
 \phi(Q(j),Q(i)) &\geq  \beta(j,i)-\Delta(i+1-|[1,i)\cap J|+|[1,j)\cap J|,i)\\
 &\geq \beta(j,i)-(|[1,i)\cap J|-|[1,j)\cap J|-1)\quad\text{(by \eqref{signe})}\\
 &= |(j,i]\cap J|-(|[j,i)\cap J|-1)\quad\text{(by \eqref{difff})}\\
 &= |(j,i)\cap J|-|(j,i)\cap J|\quad(\text{since }j\in J\text{ and } i\notin J)
 \end{align*}
  so that $\phi(Q(j),Q(i))\geq 0$,
\item  for $j>i$, we have $j>i+1$ and by using \eqref{dif2}
\begin{align*}
\phi(Q(j),Q(i)) &\leq -\beta(i+1,j)-\Delta(i+1-|[1,i)\cap J|+|[1,j)\cap J|,i+1)\\
&\leq -\beta(i+1,j)+|[i,j)\cap J|\quad\text{(by \eqref{signe})}\\
&= -|(i+1,j]\cap J|+|[i,j)\cap J|\quad\text{(by \eqref{difff})}\\
&= -1-|(i+1,j)\cap J|+|(i+1,j)\cap J|\quad(\text{since }j\in J\text{ and } i,i+1\notin J).
\end{align*}
so that 
$\phi(Q(j),Q(i))\leq -1 <0$ for any $j>i$.
\end{enumerate}
With this, by \Prp{pr11} and \Prp{pr21}, we obtain that the final position $P$ after \St of $\Phi$ is exactly such that, for any 
$i\in I$,  
\begin{align*}
Q(i)-P(Q(i))&= |\{j\in J/\, Q(j)<Q(i), \phi(Q(j),Q(i))<0\}|\\
&= |\{j\in J/\, Q(j)<Q(i), j>i\}|\\
&= |\{j\in J/\, j>i,\psi(j,i)>0\}|\\
&= Q(i)-i\,\,,
\end{align*}
so that $P(Q(i))=i$. As before, we obtain that $P(Q(X))=X$ for any $X\in \{I,I+1,J\}$. We  then have by \Lem{lem1} that $\Phi(\Psi(\nu))=\nu$.
\end{enumerate}

\section*{Acknowlegdements}
We  would like to thank Jeremy Lovejoy and Jehanne Dousse for advice given during the writing of the paper.


\begin{thebibliography}{999}

\bibitem{Alladi99}
K. ALLADI, \emph{A variation on a theme of Sylvester - a smoother road to G\"{o}llnitz's (Big) theorem}, Discrete Math. \textbf{196} (1999), 1--11.

\bibitem{AG93}
K. ALLADI and B. GORDON, \emph{Generalization of Schur's partition theorem}, Manuscripta Math. \textbf{79} (1993), 113--126.

\bibitem{BR80}
D. BRESSOUD, \emph{A combinatorial proof of Schur's 1926 partition theorem}, Proc. Amer. Math. Soc. \textbf{79} (1980), 338--340.

\bibitem{C93}
S. CAPPARELLI, \emph{On some representations of twisted affine Lie algebras
and combinatorial identities}, J. Algebra \textbf{154} (1993), 335--355.

\bibitem{CL06}
S. CORTEEL and J. LOVEJOY, \emph{An iterative-bijective approach to generalizations of Schur's theorem}, Eur. J. Comb. \textbf{27} (2006), 496--512.

\bibitem{Dousse16}
J. DOUSSE, \emph{Siladi\'c's theorem: weighted words, refinement and companion}, Proc. Amer. Math. Soc. \textbf{145} (2017), 1997--2009.

\bibitem{LW84}
J. LEPOWSKY and R. WILSON, \emph{The structure of standard modules, I: Universal algebras and the Rogers-Ramanujan identities}, Invent. Math. \textbf{77} (1984), 199--290.

\bibitem{MP87}
A. MEURMAN and M. PRIMC, \emph{Annihilating ideals of standard modules of
$sl(2,\mathbb{C})^\sim$ and combinatorial identities}, Adv. Math.\textbf{64} (1987), 177--240.

\bibitem{MP99}
A. MEURMAN and M. PRIMC, \emph{Annihilating ideals of standard modules of
$sl(2,\mathbb{C})^\sim$ and combinatorial identities}, Mem. Amer. Math. Soc. \textbf{137} (1999), viii + 89 pp.

\bibitem{RR19}
L. J. ROGERS and S. RAMANUJAN, \emph{Proof of certain identities in combinatory analysis}, Cambr. Phil. Soc. Proc. {\bf 19} (1919), 211-216.

\bibitem{PRC04}
PADMAVATHAMMA, R. RAGHAVENDRA and B. M. CHANDRASHEKARA, \emph{A new bijective proof of a partition theorem of K. Alladi}, Discrete Math. \textbf{287} (2004), 125--128.

\bibitem{Sc26}
I. SCHUR, \emph{Zur additiven zahlentheorie}, Sitzungsberichte der Preussischen Akademie der Wissenschaften (1926) , 488--495.

\bibitem{Si02}
I. SILADI\'C, \emph{Twisted $sl(3,\mathbb{C})^\sim$-modules and combinatorial identities}, Glas. Mat. Ser. III \textbf{52(72)} (2017), 53-77.


\end{thebibliography}
\end{document}